\newtheorem{thm}{Theorem}[section]
\newtheorem{prop}[thm]{Proposition}
\newtheorem{cor}[thm]{Corollary}
\newtheorem{conj}[thm]{Conjecture}
\theoremstyle{remark}
\newtheorem{rmk}[thm]{Remark}
\newtheorem{exam}[thm]{Example}
\numberwithin{equation}{section}
\newcommand{\C}{\mathbb{C}}
\newcommand{\F}{\mathbb{F}}
\newcommand{\Oo}{\mathbb{O}}
\newcommand{\Z}{\mathbb{Z}}
\newcommand{\cF}{\mathcal{F}}
\newcommand{\cG}{\mathcal{G}}
\newcommand{\cH}{\mathcal{H}}
\newcommand{\cN}{\mathcal{N}}
\newcommand{\cO}{\mathcal{O}}
\newcommand{\cP}{\mathcal{P}}
\newcommand{\cQ}{\mathcal{Q}}
\newcommand{\cT}{\mathcal{T}}
\newcommand{\g}{\mathfrak{g}}
\newcommand{\fg}{\mathfrak{g}}
\newcommand{\fk}{\mathfrak{k}}
\newcommand{\fN}{\mathfrak{N}}
\newcommand{\dup}{{\mathrm{dup}}}
\newcommand{\SYT}{\mathrm{SYT}}
\DeclareMathOperator{\End}{End}
\DeclareMathOperator{\Ind}{Ind}
\DeclareMathOperator{\Irr}{Irr}
\DeclareMathOperator{\Des}{Des}
\DeclareMathOperator{\Lie}{Lie}
\DeclareMathOperator{\im}{im}
\title[The exotic Robinson--Schensted correspondence]
{The exotic Robinson--Schensted correspondence}
\author{Anthony Henderson}
\address{School of Mathematics and Statistics\\
University of Sydney NSW 2006\\
Australia}
\email{anthony.henderson@sydney.edu.au}
\author{Peter E. Trapa}
\address{Department of Mathematics\\
University of Utah\\
Salt Lake City, UT 84112-0090}
\email{ptrapa@math.utah.edu}
\subjclass{Primary 20G20; Secondary 14M15, 17B08}
\thanks{This research was supported by
Australian Research Council grant DP0985184 and National Science Foundation grants DMS-0968275 and DMS-0968060.}
\begin{document}

\begin{abstract}
We study the action of the symplectic group on pairs of a vector and a flag. Considering the irreducible components of the conormal variety, we obtain an exotic analogue of the Robinson--Schensted correspondence. Conjecturally, the resulting cells are related to exotic character sheaves.
\end{abstract}

\maketitle

\section{Introduction}

The \emph{Robinson--Schensted correspondence} is a well-known bijection
\begin{equation} \label{rseqn}
S_n\;\longleftrightarrow\,\coprod_{\lambda\in\cP_n}\;\SYT(\lambda)\times\SYT(\lambda)\\
\end{equation}
where $S_n$ denotes the symmetric group, $\cP_n$ denotes the set of partitions of $n$, and $\SYT(\lambda)$ denotes the set of standard Young tableaux of shape $\lambda$. The resulting partition of $S_n$ into subsets indexed by $\cP_n$ is the partition into \emph{two-sided cells}, as defined by Kazhdan and Lusztig \cite[Section 5]{kl}. These two-sided cells occur in many representation-theoretic contexts: one which is particularly relevant here is the classification of unipotent character sheaves on $GL_n$ \cite[Section 18]{lusztig:charsheaves4}.

Steinberg \cite{st1,st2} gave a geometric interpretation of the Robinson--Schensted correspondence, by showing that both sides naturally parametrize the irreducible components of a certain variety $Z$. This variety can be defined for any connected reductive complex algebraic group $G$:
\[
Z=\{(gB,g'B,x)\in G/B\times G/B\times\cN\,|\,gB\in(G/B)_x, g'B\in(G/B)_x\},
\] 
where $G/B$ is the flag variety, $\cN$ is the nilpotent cone in $\Lie(G)$, and $(G/B)_x$ denotes the Springer fibre $\{gB\,|\,x\in\Lie(gBg^{-1})\}$. By considering the irreducible components of $Z$, Steinberg obtained in \cite{st1} a bijection
\begin{equation} \label{steinbergeqn}
W_G\;\longleftrightarrow\coprod_{x\in G\setminus\cN}A_x\setminus(\Irr(G/B)_x\times\Irr(G/B)_x)
\end{equation}
where $W_G$ is the Weyl group of $G$, $G\!\setminus\!\cN$ stands for a set of representatives of the orbits of $G$ in $\cN$, $A_x$ is the component group of the stabilizer of $x$ in $G$, and $\Irr(G/B)_x$ is the set of irreducible components of the Springer fibre. When $G=GL_n$, the nilpotent orbits are parametrized by $\cP_n$, irreducible components of $(G/B)_x$ are parametrized by standard Young tableaux, and $A_x$ is trivial for all $x$; in this case, Steinberg showed in \cite{st2} that \eqref{steinbergeqn} becomes the Robinson--Schensted correspondence \eqref{rseqn}.

Outside type $A$, the bijection \eqref{steinbergeqn} lacks some of the crucial combinatorial features of \eqref{rseqn}. In particular, the resulting partition of $W_G$ into subsets indexed by $G\!\setminus\!\cN$, known as \emph{geometric two-sided cells}, is not the partition into Kazhdan--Lusztig two-sided cells, and therefore is not directly relevant to the classification of character sheaves of $G$. See McGovern's paper \cite{mcgovern} for the geometric two-sided cells in the classical types.

Now let $V$ be a $2n$-dimensional complex vector space with a fixed nondegenerate skew-symmetric form, and write $G=GL(V)$, $K=Sp(V)$, $\g=\Lie(G)$, $\fk=\Lie(K)$. In \cite{kato:exotic,kato:deformations} Kato has shown that the representation of $K$ on $V\oplus\fg/\fk$ can be regarded as an `exotic' version of the adjoint representation of $K$ on $\fk$, and has somewhat neater combinatorics than the adjoint representation. For example, if $\fN$ denotes the \emph{exotic nilpotent cone} (the Hilbert nullcone of $V\oplus\fg/\fk$), then $K$ acts on $\fN$ with connected stabilizers and the orbits are in bijection with the irreducible representations of the Weyl group $W_K$ of $K$.

Kato's results suggest that there should be an interesting theory of \emph{exotic character sheaves}, which would be $K$-equivariant simple perverse sheaves on $V\times G/K$. The definition is easy to obtain by modifying Ginzburg's definition of character sheaves on $G/K$ \cite{ginzburg}, bearing in mind the parallel theory of \emph{mirabolic character sheaves} for $GL_n$ due to Finkelberg, Ginzburg, and Travkin \cite{fg1,fg2,fgt}. Here we will restrict attention to the \emph{unipotent} exotic character sheaves. These (or rather their pull-backs to $V\times G$) are the simple perverse constituents of the complexes $p_*q^!(\cF\boxtimes\cG)$, where the diagram
\begin{equation} \label{ginzburgdiagram}
\xymatrix{
V\times G/B\times G/B & V\times G\times G/B
\ar[l]_(0.45){q} \ar[r]^(0.6){p}
& V\times G
}
\end{equation}
is defined by $q(v,g,g'B)=(v,gg'B,g'B)$, $p(v,g,g'B)=(v,g)$, and $\cF$ and $\cG$
are $K$-equivariant simple perverse sheaves on $V\times G/B$ and $G/B$ respectively.
Here $q$ is smooth, $p$ is proper, and
both $p$ and $q$ are $(K\times K)$-equivariant, where 
\begin{itemize}
\item $K\times K$ acts on $V\times G$ by $(k_1,k_2)(v,g)=(k_1 v,k_1gk_2^{-1})$,
\item $K\times K$ acts on $V\times G\times G/B$ by $(k_1,k_2)(v,g,g'B)=(k_1 v,k_1gk_2^{-1},k_2g'B)$,
\item $K\times K$ acts on $V\times G/B\times G/B$ by $(k_1,k_2)(v,gB,g'B)=(k_1v,k_1gB,k_2g'B)$.
\end{itemize}

In order to study or classify such unipotent exotic character sheaves along the lines of Grojnowski's study of unipotent character sheaves \cite[Section 3]{grojnowski}, one would first need to find the cells for the action of $K\times K$ on $V\times G/B\times G/B$. Note that this is the direct product of separate actions of $K$ on $V\times G/B$ and on $G/B$.

The cells for the action of $K$ on $G/B$ are a special case of those defined for any symmetric space $G/K$ by Lusztig and Vogan \cite{lv}, using the action of the Iwahori--Hecke algebra of $W_G$ on the graded Grothendieck group of $K$-equivariant perverse sheaves on $G/B$. In our case, $K$ acts on $G/B$ with connected stabilizers, the orbits are parametrized by the subset $R_{2n}\subset S_{2n}$ consisting of those $w$ such that $ww_0$ is a fixed-point-free involution (where $w_0$ is the longest element of $S_{2n}$), and the cells were described combinatorially by Garfinkle \cite{garfinkle}. In fact, she gave a Robinson--Schensted-style bijection
\begin{equation} \label{garfinkleeqn}
R_{2n}\;\longleftrightarrow\,\coprod_{\lambda\in\cP_{2n}^\dup}\;\SYT(\lambda)
\end{equation}
where $\cP_{2n}^\dup$ consists of those partitions of $2n$ which are \emph{duplex}, meaning that all parts have even multiplicity. The resulting partition of $R_{2n}$ into subsets indexed by $\cP_{2n}^\dup$ is the partition into cells, and the bijection \eqref{garfinkleeqn} has a geometric interpretation similar to Steinberg's interpretation of \eqref{rseqn}: see \cite[Theorem 5.6]{trapa}.

What remains is to analyse the situation when $G/B$ is replaced by $V\times G/B$. The corresponding problem in the context of mirabolic character sheaves was addressed by Travkin \cite{travkin}, who studied the action of $GL_n$ on $\C^n\times GL_n/B_n\times GL_n/B_n$ (where $B_n$ denotes a Borel subgroup of $GL_n$), and the resulting \emph{mirabolic Robinson--Schensted correspondence}. Our work was inspired by his.

In Section 2, we re-formulate the parametrization of $K$-orbits in $V\times G/B$, due to Matsuki: they are in bijection with a modification $R_{2n}'$ of the set $R_{2n}$, defined in Proposition \ref{orbitprop}. In Section 3, we follow Steinberg's geometric approach to construct a bijection which we call the \emph{exotic Robinson--Schensted correspondence}:
\begin{equation}
R_{2n}'\;\longleftrightarrow\,\coprod_{(\mu;\nu)\in\cQ_{2n}'}\;\SYT(\mu+\nu),
\end{equation}
where $\cQ_{2n}'$ is a set of bipartitions, defined in \eqref{q2ndef}. The construction of this bijection involves both Kato's exotic nilpotent cone and the enhanced nilpotent cone of \cite{ah}. In Section 4, we state some conjectures relating this bijection to cells and exotic character sheaves.
\section{$K$-orbits in $V\times G/B$}
Fix a positive integer $n$. Let $V$ be a $2n$-dimensional complex vector space with a fixed nondegenerate skew-symmetric form $\langle\cdot,\cdot\rangle$. Write $G=GL(V)$, $K=Sp(V)$ as in the introduction. Let $B$ be a Borel subgroup of $G$. We will identify $G/B$ with the variety of complete flags $V_\bullet=(V_i)_{0\leq i\leq 2n}$ in $V$. Given a flag $V_\bullet$, we obtain another flag $V_{2n-\bullet}^\perp=(V_{2n-i}^\perp)$ by taking perpendicular subspaces for the form $\langle\cdot,\cdot\rangle$. 

It is well known that the $G$-orbits in $G/B\times G/B$ are in bijection with the symmetric group $S_{2n}$. For $w\in S_{2n}$, the corresponding orbit $\cO_w^{G}$ consists of pairs of flags $(U_\bullet,V_\bullet)$ such that
\begin{equation}
\dim U_i\cap V_j=|\{1,\cdots,i\}\cap w\{1,\cdots,j\}|,\text{ for all }i,j.
\end{equation}

It is also well known \cite[Proposition 10.4.1]{rs} that the $K$-orbits in $G/B$ are in bijection with the subset 
\[
R_{2n}=\{w\in S_{2n}\,|\,w(2n+1-i)=2n+1-w^{-1}(i)\neq i,\text{ for all }i\}.
\]
For $w\in R_{2n}$, the orbit $\cO_w$ consists of flags $V_\bullet$ such that $(V_\bullet,V_{2n-\bullet}^\perp)\in\cO_w^G$, or in other words
\begin{equation}
\dim V_i\cap V_{2n-j}^\perp=|\{1,\cdots,i\}\cap w\{1,\cdots,j\}|,\text{ for all }i,j.
\end{equation}

\begin{rmk} 
If $w_0$ denotes the longest element of $S_{2n}$ for the usual length function, defined by $w_0(i)=2n+1-i$, then $w\mapsto ww_0$ gives a bijection between $R_{2n}$ and the set of fixed-point-free involutions in $S_{2n}$. The latter set is also commonly used to parametrize the $K$-orbits in $G/B$, for instance in \cite[Proposition 6.1]{trapa}.
\end{rmk}

As was shown by Travkin \cite[Lemma 2]{travkin}, the $G$-orbits in $V\times G/B\times G/B$ are in bijection with the set $S_{2n}'$ of pairs $(w,\alpha)$ where $w\in S_{2n}$ and $\alpha$ is a subset of $\{1,2,\cdots,2n\}$ such that $i<j$, $w^{-1}(i)<w^{-1}(j)$ and $j\in\alpha$ together imply $i\in\alpha$. (Here we have set $\alpha=w(\beta)$ where $(w,\beta)$ is Travkin's parameter.) For $(w,\alpha)\in S_{2n}'$, the orbit $\cO_{w,\alpha}^G$ consists of triples $(v,U_\bullet,V_\bullet)$ such that $(U_\bullet,V_\bullet)\in\cO_w^G$ and
\begin{equation}
v\in U_i+V_j \Longleftrightarrow \alpha\subseteq \{1,\cdots,i\}\cup w\{1,\cdots,j\},\text{ for all }i,j.
\end{equation} 

The analogous statement in our context was proved by Matsuki:
\begin{prop} \label{orbitprop} \cite[Theorem 1.15(i)]{matsuki}
The $K$-orbits in $V\times G/B$ are in bijection with the subset $R_{2n}'$ of $S_{2n}'$ consisting of $(w,\alpha)$ for which $w\in R_{2n}$. For $(w,\alpha)\in R_{2n}'$, the orbit $\cO_{w,\alpha}$ consists of pairs $(v,V_\bullet)$ such that $(v,V_\bullet,V_{2n-\bullet}^\perp)\in\cO_{w,\alpha}^G$.
\end{prop}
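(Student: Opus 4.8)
The plan is to realize $G/K$ as the space $\mathcal{S}$ of nondegenerate skew-symmetric bilinear forms on $V$: since $G=GL(V)$ acts transitively on $\mathcal{S}$ by $(g\cdot\omega)(x,y)=\omega(g^{-1}x,g^{-1}y)$ with $\mathrm{Stab}_G(\langle\cdot,\cdot\rangle)=K$, we get $\mathcal{S}\cong G/K$, and the standard correspondence ``$H$-orbits in $X$ $\leftrightarrow$ $G$-orbits in $X\times G/H$'' identifies $K$-orbits in $V\times G/B$ with $G$-orbits in $V\times G/B\times\mathcal{S}$, the orbit of $(v,V_\bullet)$ matching that of $(v,V_\bullet,\langle\cdot,\cdot\rangle)$. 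One then studies these via the $G$-equivariant morphism
\[
\Psi\colon V\times G/B\times\mathcal{S}\longrightarrow V\times G/B\times G/B,\qquad (v,V_\bullet,\omega)\longmapsto(v,V_\bullet,V_{2n-\bullet}^{\perp_\omega}),
\]
for which $\Psi(v,V_\bullet,\langle\cdot,\cdot\rangle)=(v,V_\bullet,V_{2n-\bullet}^\perp)$. Proposition \ref{orbitprop} amounts to showing that $\Psi$ induces a bijection from the set of $G$-orbits on the source onto the set of orbits $\cO_{w,\alpha}^G$ with $w\in R_{2n}$, matching parameters; the description of $\cO_{w,\alpha}$ then reads off from the definition of $\cO_{w,\alpha}^G$.

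For the image of $\Psi$, the vector-free version $\Psi_0\colon G/B\times\mathcal{S}\to G/B\times G/B$, $(V_\bullet,\omega)\mapsto(V_\bullet,V_{2n-\bullet}^{\perp_\omega})$, sends $(V_\bullet,\langle\cdot,\cdot\rangle)$ into $\cO_w^G$ precisely when $V_\bullet\in\cO_w$; since the $G$-orbits on $G/B\times\mathcal{S}\cong G/B\times G/K$ are the $K$-orbits on $G/B$, indexed by $R_{2n}$, $G$-equivariance forces the image of $\Psi_0$ to be exactly $\coprod_{w\in R_{2n}}\cO_w^G$ and $\Psi_0$ to carry distinct orbits to distinct orbits. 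Thus the image of $\Psi$ is contained in $\coprod_{w\in R_{2n}}\coprod_\alpha\cO_{w,\alpha}^G$; conversely, for $(w,\alpha)\in S_{2n}'$ with $w\in R_{2n}$ and any $(v,U_\bullet,W_\bullet)\in\cO_{w,\alpha}^G$, the pair $(U_\bullet,W_\bullet)\in\cO_w^G$ lies in the image of $\Psi_0$, so $W_\bullet=U_{2n-\bullet}^{\perp_\omega}$ for some $\omega$ and $(v,U_\bullet,W_\bullet)=\Psi(v,U_\bullet,\omega)$. Hence $\Psi$ surjects onto exactly those orbits, and the parameter of any $\Psi(v,V_\bullet,\omega)$ lies in $R_{2n}'$.

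It remains to prove $\Psi$ is injective on $G$-orbits. If two points of the source map into the same $\cO_{w,\alpha}^G$, then (by injectivity of $\Psi_0$ on orbits) their images in $G/B\times\mathcal{S}$ lie in one $G$-orbit, so after moving one of them by an element of $G$ we may assume the two points share the flag $U_\bullet$ and the form $\omega$, hence also the flag $W_\bullet=U_{2n-\bullet}^{\perp_\omega}$; their two vectors $v_1,v_2$ then satisfy the same incidences $v\in U_i+W_j$ by the description of $\cO_{w,\alpha}^G$. So it suffices to show that $\mathrm{Stab}_{Sp(V)}(U_\bullet)$ — which equals $\mathrm{Stab}_{Sp(\omega)}(U_\bullet,W_\bullet)$, since a symplectic automorphism fixing $U_\bullet$ necessarily fixes $U_{2n-\bullet}^{\perp_\omega}$ — acts transitively on each incidence stratum $\{v\mid v\in U_i+W_j\iff\alpha\subseteq\{1,\ldots,i\}\cup w\{1,\ldots,j\}\}$. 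Equivalently: the subgroup $\mathrm{Stab}_{Sp(V)}(U_\bullet)$ of the connected solvable group $\mathrm{Stab}_{GL(V)}(U_\bullet,W_\bullet)$ has the same orbits on $V$ as the latter, whose orbits on $V$ are, by Travkin's theorem, precisely the strata indexed by the admissible $\alpha$ for $w$.

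This transitivity statement is the main obstacle. I would prove it by fixing a basis of $V$ adapted simultaneously to the flag $V_\bullet$ and to the form (a symplectic normal form reflecting the constraint $w\in R_{2n}$), writing out the unipotent radical and Levi factor of $\mathrm{Stab}_{Sp(V)}(V_\bullet)$ explicitly, and checking by a direct computation that the one-parameter subgroups available inside $\mathrm{Stab}_{Sp(V)}(V_\bullet)$ already move an arbitrary $v$ with given incidence pattern to a fixed normal vector — so that no orbits are lost in descending from $\mathrm{Stab}_{GL(V)}(V_\bullet,V_{2n-\bullet}^\perp)$ to $\mathrm{Stab}_{Sp(V)}(V_\bullet)$. (Alternatively one can bypass Travkin and classify the $K$-orbits on $V\times G/B$ directly by such normal-form arguments, which is close to Matsuki's method.) Granting this, $\Psi$ induces the claimed bijection, the $K$-orbit of $(v,V_\bullet)$ is $\{(v',V'_\bullet)\mid(v',V'_\bullet,V'^\perp_{2n-\bullet})\in\cO_{w,\alpha}^G\}$ for the matching $(w,\alpha)\in R_{2n}'$, and the proposition follows.
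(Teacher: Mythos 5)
The paper offers no proof of this statement: it is quoted directly from Matsuki \cite[Theorem 1.15(i)]{matsuki}, with only a remark translating his parameters into the $(w,\alpha)$ notation. So the comparison here is really between your argument and Matsuki's, not the paper's.

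Your framework is sound: identifying $G/K$ with the space $\mathcal{S}$ of nondegenerate skew forms, passing from $K$-orbits on $V\times G/B$ to $G$-orbits on $V\times G/B\times\mathcal{S}$, and studying the $G$-equivariant map $\Psi$ into $V\times G/B\times G/B$ is a correct and clean way to set this up. The containment of the image in $\coprod_{w\in R_{2n},\,\alpha}\cO_{w,\alpha}^G$, the surjectivity onto those orbits, and the reduction of injectivity to the statement that $\mathrm{Stab}_{Sp(V)}(U_\bullet)$ acts transitively on each incidence stratum of $V$ (equivalently, has the same $V$-orbits as the full intersection of Borels $\mathrm{Stab}_{GL(V)}(U_\bullet,W_\bullet)$, whose orbits are the strata by Travkin's lemma) are all correct. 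The problem is that this last transitivity claim is not a technical loose end: it \emph{is} the theorem. Everything before it is the standard translation between $K$-orbits and twisted diagonal $G$-orbits, and the genuinely new content of Matsuki's result is precisely that no stratum breaks into several $\mathrm{Stab}_K(U_\bullet)$-orbits. You assert this can be checked "by a direct computation" in a symplectic normal form adapted to $U_\bullet$, but you neither exhibit the normal form (which already requires organizing the pairs $\{i,\,2n+1-w^{-1}(i)\}$ determined by $w\in R_{2n}$ and the resulting isotropy pattern of the flag), nor carry out the computation showing the unipotent radical and Levi of $K\cap B_{U_\bullet}$ suffice to normalize $v$ within its stratum. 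As written, the proof therefore has a genuine gap at its decisive step; to close it you would either have to execute that case analysis in full (essentially reproving Matsuki's classification) or simply cite \cite[Theorem 1.15(i)]{matsuki} as the paper does, in which case the rest of your argument becomes a (correct, but then redundant) repackaging of the statement.
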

\noindent
Note that $\{0\}\times G/B$ is a $K$-stable subvariety of $V\times G/B$, and is precisely the union of the orbits $\cO_{w,\emptyset}$ for $w\in R_{2n}$. So the $K$-orbits in $G/B$ can be thought of as a special case of the $K$-orbits in $V\times G/B$.

\begin{rmk}
Matsuki's statement actually refers to the $K$-orbits in $(V\setminus\{0\})\times G/B$, or equivalently the $K^v$-orbits in $G/B$ where $K^v$ is the stabilizer in $K$ of a nonzero $v\in V$. To translate from his parameter $(I_{(A)},I_{(X)},I_{(Y)},(c_{i,j})_{i,j\in I_{(A)}})$ to ours, one should first allow the extra possibility $I_{(X)}=I_{(Y)}=\emptyset$ (corresponding to the $v=0$ case). Then define $w\in R_{2n}$ so that $ww_0$ acts on $I_{(X)}$ and $I_{(Y)}$ as the unique order-preserving bijection between those sets and on $I_{(A)}$ as the involution whose permutation matrix is $(c_{i,j})$, and set $\alpha=\{i\,|\,\exists i'\in I_{(X)}, i'\geq i, w^{-1}(i')\geq w^{-1}(i)\}$.
\end{rmk}

Generalizing a well-known result for the $K$-action on $G/B$, we have:
\begin{prop} \label{connectedprop}
The stabilizer in $K$ of any point of $V\times G/B$ is connected.
\end{prop}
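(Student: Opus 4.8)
The plan is to reduce the connectedness of the stabilizer of an arbitrary point $(v, V_\bullet) \in V \times G/B$ to the already-known connectedness of the stabilizer of a point of $G/B$. Fix a pair $(v, V_\bullet)$ and write $K_{V_\bullet}$ for the stabilizer of $V_\bullet$ in $K$; this is the intersection of a Borel subgroup of $G$ with $K$, and by the well-known result alluded to (the analogue of Proposition \ref{connectedprop} on $G/B$ alone, which follows from the Bruhat-type decomposition used to index $R_{2n}$) it is connected. The stabilizer of $(v, V_\bullet)$ is $(K_{V_\bullet})_v$, the isotropy group of the vector $v$ inside the connected solvable group $K_{V_\bullet}$. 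So the statement reduces to: \emph{the stabilizer of a vector in a connected solvable linear algebraic group acting linearly is connected.}

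The key steps I would carry out are as follows. First, observe $K_{V_\bullet}$ is connected solvable, hence (in characteristic $0$) its unipotent radical $U$ is connected and the reductive quotient is a torus $T$. The stabilizer $(K_{V_\bullet})_v$ is an extension of a subgroup of $T$ by $U_v = U \cap (K_{V_\bullet})_v$; since $T$ acts on the line $\C v$ (if $v\neq 0$) through a single character, the image of $(K_{V_\bullet})_v$ in $T$ is either all of $T$ or the kernel of that character, in either case connected. Second, I would show $U_v$ is connected: the orbit map $U \to Uv$ is a morphism of varieties with $U$ affine space, and $Uv$ is a locally closed subvariety of the affine space $V$ which is isomorphic to $U/U_v$; since $U$ is unipotent acting on a vector space, the orbit $Uv$ is closed (unipotent orbits on affine varieties are closed) and isomorphic to an affine space, so $U_v$, being the fiber of a morphism of affine spaces that is itself a homogeneous space under a unipotent group, is connected — indeed any closed subgroup of a connected unipotent group over a field of characteristic $0$ is connected. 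Combining, $(K_{V_\bullet})_v$ is an extension of a connected group by a connected group, hence connected.

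The main obstacle, and the place where care is genuinely needed rather than citation, is the interaction between the torus part and the unipotent part: the image of $(K_{V_\bullet})_v$ in $T$ need not be literally the full $T$ or a single coordinate kernel unless one is careful about how $T$ and $U$ jointly act on $v$. The clean way around this is to argue directly on component groups: $(K_{V_\bullet})_v / ((K_{V_\bullet})_v)^\circ$ injects into the component group of the image in $T$ (which is trivial) once one knows $U_v$ is connected, because $U_v \subseteq ((K_{V_\bullet})_v)^\circ$ and $(K_{V_\bullet})_v / U_v$ is a subgroup of the torus $T$ hence connected. So the real content is just the two facts (i) closed subgroups of connected unipotent groups in characteristic $0$ are connected, and (ii) closed subgroups of tori are... not connected in general, so one must instead use that $(K_{V_\bullet})_v/U_v$ embeds in $T$ \emph{and} surjects onto a subtorus — which holds because the $T$-action on $v$ modulo $U$ is linear with weight spaces, forcing the stabilizer in $T$ of the class of $v$ to be an intersection of character kernels, hence connected. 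Assembling these gives the result; alternatively, one can invoke Matsuki's parametrization (Proposition \ref{orbitprop}) together with a dimension count to identify each $(K_{V_\bullet})_v$ explicitly, but the structural argument above is cleaner and does not require recomputing orbit dimensions.
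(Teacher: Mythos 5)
Your reduction to the connected solvable group $H=K\cap B'$ (the stabilizer of the flag) is a genuinely different route from the paper, which proves nothing structural at all: it simply cites Matsuki's formula for the number of $\F_q$-points of these stabilizers and observes that his proof of that formula already exhibits connectedness over $\C$. The unipotent half of your argument is fine: in characteristic $0$ every closed subgroup of a connected unipotent group is connected, so $U_v$ is connected and $\pi_0(H_v)\cong\pi_0(\pi(H_v))$, where $\pi:H\to T=H/U$. But the toral half has a genuine gap, and the general lemma you reduce to is false: for $H=T=\mathbb{G}_m$ acting linearly on $\C$ by $t\cdot x=t^2x$, the stabilizer of $x=1$ is $\mu_2$, which is disconnected. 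The repair you offer --- that the relevant subgroup of $T$ is ``an intersection of character kernels, hence connected'' --- rests on a false implication: the intersections of character kernels in a torus are exactly its diagonalizable closed subgroups, which include every finite abelian group ($\mu_2=\ker(t\mapsto t^2)$ being the simplest case). Primitivity of the individual weights does not save you either: in $\mathbb{G}_m^2$ the common kernel of the primitive characters $\chi_1\chi_2$ and $\chi_1\chi_2^{-1}$ is a diagonally embedded $\mu_2$.

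To close the gap you would have to use specific features of the pair $(GL(V),Sp(V))$ rather than general structure theory. Concretely: one can check (via Richardson--Springer orbit representatives) that a maximal torus $T$ of $K\cap B'$ is a maximal torus of $K$ acting on $V$ with multiplicity-one weights $\pm\epsilon_1,\dots,\pm\epsilon_n$, so that any subset of the weights occurring in $v$ generates a saturated sublattice of $X^*(T)$ and $\mathrm{Stab}_T(v)$ is a genuine subtorus. Even then you are not finished, because the group $\pi(H_v)$ you actually need is not $\mathrm{Stab}_T(v)$ but $\{t\in T\mid tv\in Uv\}$, the stabilizer of the $U$-orbit of $v$, which is a priori larger; proving that this larger group is still connected requires a further argument (for instance an induction along the flag, comparing $tv$ with $v$ modulo the subspaces $V_j$). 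So the architecture of your proof is workable, but as written the decisive step is justified by a false general principle, and the statement you would need is exactly the nontrivial content of the proposition.
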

\begin{proof}
Matsuki gives a formula \cite[Proposition 1.14(ii)]{matsuki} for the number of $\F_q$-points of such a stabilizer defined over a finite field $\F_q$. His proof of this formula also shows the connectedness of such a stabilizer over $\C$.
\end{proof}

Matsuki also gives a formula for the number of orbits:
\begin{prop} \label{numberprop} \cite[Theorem 1.15(ii)]{matsuki}
With notation as above,
\[
|R_{2n}'|=\sum_{j=0}^n \frac{(2n)!}{2^{n-j}(n-j)!(j!)^2}.
\]
\end{prop}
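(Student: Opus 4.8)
The plan is to prove the formula by computing $|R_{2n}'|$ combinatorially; by Proposition~\ref{orbitprop} this number is the number of $K$-orbits, so no further geometry enters. The first step is to translate the condition defining $R_{2n}'$ into the language of posets. Using the bijection $w\mapsto\sigma:=ww_0$ identifying $R_{2n}$ with the set of fixed-point-free involutions of $\{1,\dots,2n\}$ (the first Remark of this section), a short computation gives $w^{-1}(i)=2n+1-\sigma(i)$, so the requirement ``$i<j$, $w^{-1}(i)<w^{-1}(j)$, $j\in\alpha\Rightarrow i\in\alpha$'' becomes ``$i<j$, $\sigma(i)>\sigma(j)$, $j\in\alpha\Rightarrow i\in\alpha$''. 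Thus $\alpha$ must be a down-set of the poset $P_\sigma$ on $\{1,\dots,2n\}$ defined by $i\prec j\iff i<j$ and $\sigma(i)>\sigma(j)$, whence
\[
|R_{2n}'|=\sum_{\sigma}\#\{\text{down-sets of }P_\sigma\},
\]
the sum running over fixed-point-free involutions $\sigma$ of $\{1,\dots,2n\}$.

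Next I would use the standard bijection between down-sets and antichains of a finite poset (a down-set corresponds to its set of maximal elements) to replace $\#\{\text{down-sets of }P_\sigma\}$ by the number of antichains of $P_\sigma$. A subset $S$ is an antichain of $P_\sigma$ exactly when $\sigma$ is increasing on $S$, i.e.\ when $S$ is the support of an increasing subsequence of the word $\sigma(1)\,\sigma(2)\cdots\sigma(2n)$ (the empty subsequence included). Interchanging the order of summation then gives
\[
|R_{2n}'|=\#\{(\sigma,S):\sigma\text{ a fixed-point-free involution of }\{1,\dots,2n\},\ \sigma|_S\text{ increasing}\}.
\]
The crucial point, and the one I expect to need the most care, is that whenever $\sigma|_S$ is increasing one automatically has $S\cap\sigma(S)=\emptyset$: otherwise $S\cap\sigma(S)$ would be a nonempty $\sigma$-stable set on which $\sigma$ is at once a fixed-point-free involution and increasing, which is impossible.

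Granting this, I would set $T=\sigma(S)$ and observe that the pair $(\sigma,S)$ is determined by the disjoint pair $(S,T)$ of equal-size subsets together with the restriction of $\sigma$ to the complement of $S\cup T$: the restriction of $\sigma$ to $S\cup T$ is forced to be the unique order-preserving matching of $S$ with $T$ (an order-preserving bijection of totally ordered sets has order-preserving inverse), while the restriction to the remaining $2n-2k$ elements, where $k=|S|$, may be any fixed-point-free involution. This construction is reversible, so the pairs $(\sigma,S)$ with $|S|=k$ are counted by choosing $S\cup T$ ($\binom{2n}{2k}$ ways), splitting it into $S$ and $T$ ($\binom{2k}{k}$ ways), and choosing a fixed-point-free involution of the complement ($(2n-2k-1)!!=\frac{(2n-2k)!}{2^{n-k}(n-k)!}$ ways). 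Summing over $k=0,\dots,n$ and simplifying $\binom{2n}{2k}\binom{2k}{k}\frac{(2n-2k)!}{2^{n-k}(n-k)!}=\frac{(2n)!}{2^{n-k}(n-k)!(k!)^2}$ then yields the stated formula.
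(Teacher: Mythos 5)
Your proof is correct. The paper does not actually prove this proposition; it simply cites Matsuki's Theorem 1.15(ii), which counts the $K$-orbits via his explicit parametrization of $K^v$-orbits in $G/B$ (cf.\ the second Remark of Section 2). You instead count $|R_{2n}'|$ directly from the combinatorial definition, which makes the statement self-contained. Each step checks out: from $\sigma=ww_0=w_0w^{-1}$ one indeed gets $w^{-1}(i)=2n+1-\sigma(i)$, so the defining condition on $\alpha$ says exactly that $\alpha$ is a down-set of $P_\sigma$; the down-set/antichain bijection and the identification of antichains with sets on which $\sigma$ is increasing are both right; and the key disjointness claim holds because $S\cap\sigma(S)$ is $\sigma$-stable (if $x=\sigma(y)$ with $x,y\in S$ then $\sigma(x)=y\in S\cap\sigma(S)$), and a strictly increasing bijection of a finite totally ordered set with itself is the identity, contradicting fixed-point-freeness. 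The final count $\binom{2n}{2k}\binom{2k}{k}(2n-2k-1)!!=\frac{(2n)!}{2^{n-k}(n-k)!(k!)^2}$ is correct (checked against $n=1,2$, giving $3$ and $21$, matching Table 1). What your approach buys is an elementary, purely combinatorial verification independent of Matsuki's orbit analysis; what the paper's citation buys is that the formula comes packaged with the orbit parametrization of Proposition \ref{orbitprop} itself, which is needed anyway.
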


A further problem is to describe the closure order on the $K$-orbits in $V\times G/B$. In \cite{magyar}, Magyar gave a combinatorial definition of a partial order $\leq$ on $S_{2n}'$ such that for any $(w,\alpha),(y,\beta)\in S_{2n}'$,
\begin{equation} \label{magyareqn}
\cO_{y,\beta}^G\subseteq\overline{\cO_{w,\alpha}^G}\;\Longleftrightarrow\;(y,\beta)\leq(w,\alpha).
\end{equation}
(In fact, Magyar used a slightly different parameter set, and effectively excluded the pairs $(w,\emptyset)$ since he considered $G$-orbits in $\mathbb{P}(V)\times G/B\times G/B$.) It is natural to conjecture that for any $(w,\alpha),(y,\beta)\in R_{2n}'$,
\begin{equation} \label{poconjeqn}
\cO_{y,\beta}\subseteq\overline{\cO_{w,\alpha}}\;\Longleftrightarrow\;(y,\beta)\leq(w,\alpha).
\end{equation}
The $\Longrightarrow$ direction follows immediately from \eqref{magyareqn}. We will not need \eqref{poconjeqn} here.

\begin{exam} \label{n2exam}
Let $n=2$. The three elements of $R_4$, written in one-line notation, are $1234$, $2143$, and $3412$. We will denote an element $(w,\alpha)\in R_4'$ by putting bars over the one-line notation for $w$ to indicate which elements belong to $\alpha$. For example, $\overline{21}4\overline{3}$ denotes $(2143,\{1,2,3\})$, and the corresponding orbit is
\[ \cO_{\overline{21}4\overline{3}}=\{(v,V_\bullet)\in V\times G/B\,|\,V_2^\perp=V_2, V_1^\perp\neq V_3, v\in V_3\setminus V_2\}, \]
with closure
\[ \overline{\cO_{\overline{21}4\overline{3}}}=\{(v,V_\bullet)\in V\times G/B\,|\,V_2^\perp=V_2, v\in V_3\}. \]
The Hasse diagram for the partial order $\leq$ on $R_4'$ is shown in Table \ref{hasse}. 
It is easy to verify \eqref{poconjeqn} in this case. All the orbit closures here are smooth except for 
\[ \overline{\cO_{\overline{3}4\overline{1}2}}=\{(v,V_\bullet)\in V\times G/B\,|\,v\in V_1^\perp\cap V_3\}, \]
whose singular locus is 
\[ \overline{\cO_{\overline{1}234}}=\{(v,V_\bullet)\in V\times G/B\,|\,V_1^\perp=V_3, v\in V_1\}. \]
\end{exam}

\begin{table}
\[
\xymatrix@R=20pt@C=10pt{
&&&& \overline{3412} \ar@{-}[dll]\ar@{-}[d]\ar@{-}[dr] &\\
&& \overline{2143} \ar@{-}[dll]\ar@{-}[dl]\ar@{-}[d] && \overline{3}4\overline{12} \ar@{-}[dlll]\ar@{-}[dl]\ar@{-}[d] & \overline{341}2 \ar@{-}[dlll]\ar@{-}[dll]\ar@{-}[d] \\
\overline{1234} \ar@{-}[dr] & \overline{21}4\overline{3} \ar@{-}[d]\ar@{-}[drr] & \overline{214}3 \ar@{-}[dl]\ar@{-}[dr] & \overline{3}4\overline{1}2 \ar@{-}[dll]\ar@{-}[d]\ar@{-}[dr]\ar@{-}[drr] & 34\overline{12} \ar@{-}[dl]\ar@{-}[d] & \overline{34}12 \ar@{-}[dll]\ar@{-}[d] \\
& \overline{123}4 \ar@{-}[dr] && \overline{21}43 \ar@{-}[dl]\ar@{-}[d]\ar@{-}[dr] & 34\overline{1}2 \ar@{-}[dl]\ar@{-}[dr] & \overline{3}412 \ar@{-}[dl]\ar@{-}[d] \\
&& \overline{12}34 \ar@{-}[dr] & 2\overline{1}43 \ar@{-}[d]\ar@{-}[dr] & \overline{2}143 \ar@{-}[dl]\ar@{-}[d] & 3412 \ar@{-}[dl] \\
&&& \overline{1}234 \ar@{-}[dr] & 2143 \ar@{-}[d] &\\
&&&& 1234 &
}
\]
\caption{Hasse diagram of $R_4'$.}\label{hasse}
\end{table}
\section{The conormal variety}
Continue the notation of the previous section. Let $\g=\Lie(G)=\End(V)$, and let $\cN$ be the nilpotent cone in $\g$. Let $\fk=\Lie(K)=\{y\in\g\,|\,y=-y^\perp\}$, where $y^\perp$ denotes the adjoint of $y$ for the symplectic form, i.e.\ the endomorphism such that
\begin{equation}
\langle yv_1,v_2\rangle=\langle v_1,y^\perp v_2\rangle,\text{ for all }v_1,v_2\in V.
\end{equation}
Let $S=\{y\in\g\,|\,y=y^\perp\}$ be the $K$-stable complementary subspace to $\fk$ in $\g$.

It is well known that the cotangent bundle $T^*(G/B)$ may be identified with the variety 
\[ \{(V_\bullet,x)\in G/B\times\cN\,|\,x(V_i)\subseteq V_{i-1},\,1\leq i\leq 2n\}. \] 
The projection onto the first factor is the cotangent bundle projection, and the projection onto the second factor is the moment map for the action of $G$ on $G/B$, after we identify $\g^*$ with $\g$ using the trace form. Recall that the condition $x(V_i)\subseteq V_{i-1}$, for $1\leq i\leq 2n$, is equivalent to $V_\bullet\in(G/B)_x$.

We can identify $V$ with $V^*$ via the map which sends $u\in V$ to the linear function $\langle u,\cdot\rangle:V\to\C$. Hence the cotangent bundle $T^*(V\times G/B)$ may be identified with the variety of quadruples $(v,u,V_\bullet,x)$ where $v,u\in V$ and $V_\bullet\in(G/B)_x$ as above. For $v,u\in V$, we define $\tau_{v,u}\in\g$ by $\tau_{v,u}(v')=\langle u,v'\rangle v$. This has rank $1$ (unless $v=0$ or $u=0$, in which case $\tau_{v,u}=0$), and every endomorphism of $V$ of rank $1$ is of this form.

For $(w,\alpha)\in R_{2n}'$, let $Y_{w,\alpha}\subset T^*(V\times G/B)$ be the conormal bundle over the orbit $\cO_{w,\alpha}$, and let $Y=\bigcup_{(w,\alpha)\in R_{2n}'}Y_{w,\alpha}$ be the conormal variety. Since each $Y_{w,\alpha}$ is irreducible of dimension equal to $\dim (V\times G/B)=2n^2+n$, the irreducible components of $Y$ are the closures $\overline{Y_{w,\alpha}}$. Our aim in this section, following \cite{st1} and \cite{travkin}, is to find a different parametrization of these irreducible components, and thus construct a bijection between $R_{2n}'$ and the other parameter set.

\begin{prop} \label{yprop}
With the above description of $T^*(V\times G/B)$, we have
\[
Y=\{(v,u,V_\bullet,x)\in T^*(V\times G/B)\,|\,x^\perp-x=\tau_{v,u}+\tau_{u,v}\}.
\]
\end{prop}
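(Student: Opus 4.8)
The plan is to compute the conormal bundle $Y_{w,\alpha}$ orbit by orbit is too painful; instead I would characterize $Y$ directly as the locus cut out by the moment map condition, using the identification of $T^*(V\times G/B)$ above. The key point is that $Y$ is, by definition, the union of conormal bundles to $K$-orbits, and a standard fact about moment maps says: if a group $K$ acts on a smooth variety $M$ with moment map $\mu\colon T^*M\to\fk^*$, then $\bigcup_{\text{orbits}}\, T^*_{\cO}M$ coincides with $\mu^{-1}(0)$ (the conormal bundle to an orbit is exactly the set of covectors at points of that orbit which annihilate the tangent space to the orbit, i.e.\ which are killed by the moment map). So I would first write down the moment map $\mu\colon T^*(V\times G/B)\to\fk^*\cong\fk$ for the $K$-action, identifying $\fk^*$ with $\fk$ via the trace form restricted to $\fk$, and then show that $\mu(v,u,V_\bullet,x)=0$ is equivalent to $x^\perp-x=\tau_{v,u}+\tau_{u,v}$.

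To compute $\mu$, I would use that the $K$-action on $V\times G/B$ is the restriction of the $G$-action, so its moment map is the composition of the $G$-moment map $T^*(V\times G/B)\to\g^*\cong\g$ with the projection $\g\to\fk$ dual to the inclusion $\fk\hookrightarrow\g$. Under the trace form, $\g=\fk\oplus S$ is an orthogonal decomposition, so the projection $\g^*\to\fk^*$ becomes the projection $\g\to\fk$ along $S$, which sends $y\mapsto\tfrac12(y-y^\perp)$. The $G$-moment map for the cotangent bundle of the (partial) ``mirabolic'' space $V\times G/B$ is the sum of two contributions: from the $G/B$ factor it gives $x$ (this is the content of the identification of $T^*(G/B)$ recalled in the text, where the second projection is the moment map), and from the $V$ factor, where $G=GL(V)$ acts linearly with the standard symplectic-type pairing between $V$ and $V^*\cong V$, it gives the rank-$\le 2$ endomorphism $v'\mapsto\langle u,v'\rangle v$, i.e.\ $\tau_{v,u}$ — or possibly $-\tau_{v,u}$, depending on sign conventions in the identification $V\cong V^*$. (Here I should double-check the sign and the fact that the pairing $V\times V\to\C$, $(v,u)\mapsto\langle u,v\rangle$ is skew, so that this contribution may actually come out symmetrized automatically.) Putting these together, $\mu(v,u,V_\bullet,x)=\tfrac12\bigl((x+\tau_{v,u})-(x+\tau_{v,u})^\perp\bigr)$; since $\tau_{v,u}^\perp=\tau_{u,v}$ (a one-line check from $\langle\tau_{v,u}v_1,v_2\rangle=\langle u,v_1\rangle\langle v,v_2\rangle=\langle v_1,\langle v,v_2\rangle u\rangle$... reconciling with the skew-symmetry of $\langle\cdot,\cdot\rangle$), this equals $\tfrac12\bigl(x-x^\perp+\tau_{v,u}-\tau_{u,v}\bigr)$, and setting it to $0$ rearranges to $x^\perp-x=\tau_{v,u}+\tau_{u,v}$ up to sorting out the signs, which is the claimed equation.

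The remaining work is to justify that $\mu^{-1}(0)$ equals the union of conormal bundles \emph{as reduced schemes}, i.e.\ that there are no extra components. Since each conormal bundle $Y_{w,\alpha}$ has dimension $\dim(V\times G/B)=2n^2+n$ and is visibly contained in $\mu^{-1}(0)$, and since $\mu^{-1}(0)$ is cut out inside $T^*(V\times G/B)$ (dimension $4n^2+2n$) by the condition $\mu=0$ taking values in $\fk$ (dimension $2n^2+n$), every component of $\mu^{-1}(0)$ has dimension at least $2n^2+n$; on the other hand $\mu^{-1}(0)$ is a union of conormal spaces by the general moment-map fact, hence of dimension exactly $2n^2+n$, hence is the union of the $\overline{Y_{w,\alpha}}$ with no extraneous pieces. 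I expect the main obstacle to be purely bookkeeping: pinning down the correct signs and normalizations in the chain of identifications $\g^*\cong\g$, $\fk^*\cong\fk$, $V^*\cong V$, and in the formula for the moment map of the $V$-factor, so that the final equation comes out \emph{exactly} as $x^\perp-x=\tau_{v,u}+\tau_{u,v}$ rather than with a stray sign or factor of $2$; the conceptual content (moment map $=$ union of conormals, dimension count) is routine.
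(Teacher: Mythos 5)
Your proposal is correct and follows essentially the same route as the paper: the $K$-moment map is the $G$-moment map $(v,u,V_\bullet,x)\mapsto x+\tau_{v,u}$ composed with the projection $\g\to\fk$, $y\mapsto\tfrac{1}{2}(y-y^\perp)$, and $Y=\mu^{-1}(0)$ by the standard conormal/moment-map fact (the concluding dimension count is not needed, since that fact is already a set-theoretic equality). The one sign you left unresolved comes out right: skew-symmetry of $\langle\cdot,\cdot\rangle$ gives $\tau_{v,u}^\perp=-\tau_{u,v}$, so $\mu=\tfrac{1}{2}(\tau_{v,u}+\tau_{u,v}+x-x^\perp)$ and $\mu=0$ is exactly the stated equation.
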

\begin{proof}
If we identify $\fk^*$ with $\fk$ using the trace form, then the natural linear map $\g^*\to\fk^*$ becomes the projection $\fg\to\fk:y\mapsto \frac{1}{2}(y-y^\perp)$. So the moment map for the action of $K$ on $V\times G/B$ is the map $\mu:T^*(V\times G/B)\to\fk$ defined by
\begin{equation}
\mu(v,u,V_\bullet,x)=\frac{1}{2}((\tau_{v,u}+x)-(\tau_{v,u}+x)^\perp)=\frac{1}{2}(\tau_{v,u}+\tau_{u,v}+x-x^\perp).
\end{equation}
Since $Y=\mu^{-1}(0)$, the result follows.
\end{proof}

We let $\pi:Y\to Z$ denote the obvious projection where
\begin{equation}
Z=\{(v,u,x)\in V\times V\times\cN\,|\,x^\perp-x=\tau_{v,u}+\tau_{u,v}\}.
\end{equation}
Note that each fibre $\pi^{-1}(v,u,x)$ can be identified with the Springer fibre $(G/B)_x$.

\begin{rmk} 
If $Y_0$ denotes the conormal variety for the action of $K$ on $G/B$, then the statement analogous to Proposition \ref{yprop} is that $Y_0=\{(V_\bullet,x)\in T^*(G/B)\,|\,x\in S\}$, and the analogue of $\pi$ is the projection $\pi_0:Y_0\to\cN\cap S$, which is the restriction to $Y_0$ of the moment map for the action of $G$ on $G/B$. We have no such moment-map interpretation of $\pi$ or $Z$. Note that $(v,u,x)\in Z$ does not imply $x\in S$.
\end{rmk}

Here is a vital fact about the variety $Z$.
\begin{prop} \label{nilpprop}
If $(v,u,x)\in Z$, then 
\[ \langle x^i v, x^j v\rangle=\langle x^i u, x^j u\rangle=\langle x^i v, x^j u\rangle=0,\text{ for all $i,j\geq 0$.} \]
\end{prop}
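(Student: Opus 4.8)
The plan is to exploit the defining relation $x^{\perp}-x=\tau_{v,u}+\tau_{u,v}$ together with the fact that $x$ is nilpotent. First I would record the basic identity governing how $x^{\perp}$ interacts with the form: for any $a,b\in V$ and any $m\geq 0$,
\[
\langle x^{m}a,b\rangle=\langle a,(x^{\perp})^{m}b\rangle,
\]
which is immediate by iterating the definition of $x^{\perp}$. Next I would rewrite the relation as $x^{\perp}=x+\tau_{v,u}+\tau_{u,v}$, so that $x^{\perp}$ acts as $x$ plus a rank-$\leq 2$ operator whose image lies in $\mathrm{span}\{v,u\}$ and which is built from the functionals $\langle u,\cdot\rangle$ and $\langle v,\cdot\rangle$. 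The point is that the three desired vanishing statements say precisely that the subspace $W=\mathrm{span}\{x^{i}v,x^{j}u:i,j\geq 0\}$ is isotropic for $\langle\cdot,\cdot\rangle$.

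The key step is an induction showing that $W$ is isotropic. I would set up the induction on the total power: let $W_{k}=\mathrm{span}\{x^{i}v,x^{i}u:0\leq i\leq k\}$ and prove by induction on $k$ that $W_{k}$ is isotropic, i.e.\ $\langle x^{i}a,x^{j}b\rangle=0$ for $a,b\in\{v,u\}$ and $i+j\leq 2k$ (one can organize the bookkeeping so that the new cases at stage $k$ are those with, say, $j=k$). To pass from $j$ to $j+1$ in an inner product $\langle x^{i}a,x^{j+1}b\rangle$, I would move one copy of $x$ across using the identity above to get $\langle x^{i}a,x^{j+1}b\rangle=\langle x^{\perp}x^{i}a,x^{j}b\rangle$, then substitute $x^{\perp}=x+\tau_{v,u}+\tau_{u,v}$. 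The term $\langle x^{i+1}a,x^{j}b\rangle$ is handled by the inductive hypothesis (its total degree has not increased), while the two correction terms $\langle \tau_{v,u}(x^{i}a),x^{j}b\rangle$ and $\langle\tau_{u,v}(x^{i}a),x^{j}b\rangle$ are scalar multiples of $\langle v,x^{j}b\rangle$ and $\langle u,x^{j}b\rangle$ respectively — that is, inner products with $x^{0}v$ or $x^{0}u$ — hence lie in a lower stage and vanish by induction. The base case $k=0$ is the single statement $\langle v,v\rangle=\langle u,u\rangle=\langle v,u\rangle=0$; since $\langle\cdot,\cdot\rangle$ is skew-symmetric, $\langle v,v\rangle$ and $\langle u,u\rangle$ vanish automatically, and $\langle v,u\rangle=-\langle u,v\rangle$, so I would need the one remaining identity $\langle v,u\rangle=0$.

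The main obstacle, then, is precisely establishing that base-case identity $\langle u,v\rangle=0$, and more generally making sure the induction never secretly requires an inner product it has not yet proved to vanish. For $\langle u,v\rangle=0$: from $x^{\perp}-x=\tau_{v,u}+\tau_{u,v}$ one can take the $\perp$-adjoint of both sides, or evaluate a suitable matrix entry; using that $x$ is nilpotent (so $x$ and $x^{\perp}$ have trace zero, and more usefully $x^{N}=0$ for large $N$), I would iterate $x^{\perp}=x+\tau_{v,u}+\tau_{u,v}$ to analyse $(x^{\perp})^{N}$ versus $x^{N}$, or simply pair the relation against $v$ and $u$: $\langle(x^{\perp}-x)v,v\rangle=\langle\tau_{v,u}v+\tau_{u,v}v,v\rangle=\langle u,v\rangle v$ — wait, more carefully, $\tau_{v,u}(v)=\langle u,v\rangle v$ and $\tau_{u,v}(v)=\langle v,v\rangle u=0$, while $\langle(x^{\perp}-x)v,v\rangle=\langle v,xv\rangle-\langle xv,v\rangle=2\langle v,xv\rangle$; so I get $2\langle v,xv\rangle=\langle u,v\rangle\langle v,v\rangle=0$, which gives a degree-one relation rather than $\langle u,v\rangle=0$ directly. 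This indicates the cleanest route is to prove simultaneously by induction the stronger package of all three families, feeding $\langle u,v\rangle=0$ itself out of the nilpotency of $x$ via a separate short argument (e.g.\ comparing $\langle x^{\perp}v,v\rangle$-type expressions at the top nilpotency degree, or using that the rank-$\leq 2$ perturbation cannot change the fact that $x^{N}=0$ forces certain pairings to collapse). I would isolate that as a small lemma and then run the main induction above, which is otherwise routine.
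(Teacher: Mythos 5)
Your treatment of the families $\langle x^i v,x^j v\rangle$ and $\langle x^i u,x^j u\rangle$ is fine and matches the paper: the recursion obtained from $x^\perp=x+\tau_{v,u}+\tau_{u,v}$ shows that all entries on a fixed anti-diagonal $i+j=d$ are equal modulo correction terms that are multiples of $\langle v,x^kv\rangle$ (resp.\ $\langle u,x^ku\rangle$) of lower degree, and skew-symmetry on the diagonal then kills the common value. The genuine gap is in the cross terms $\langle x^i v,x^j u\rangle$, which are the real content of the proposition. There your recursion reads $\langle x^iv,x^{j+1}u\rangle-\langle x^{i+1}v,x^ju\rangle=-\langle x^iv,u\rangle\langle v,x^ju\rangle$ (the other correction term dies by the first two families), so an increasing induction on total degree only yields: \emph{if} all cross terms of degree $<d$ vanish, \emph{then} all cross terms with $i+j=d$ are equal to one another and to the endpoints $\langle x^dv,u\rangle$ and $\langle v,x^du\rangle$. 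Nothing forces that common value to be zero, and there is no skew-symmetry within this family to invoke. You correctly sense this and localize the obstacle to the base case $\langle v,u\rangle=0$, but that underestimates it: even granting $\langle v,u\rangle=0$, the induction stalls again at degree $1$ (you get $\langle xv,u\rangle=\langle v,xu\rangle$ and no more), and at every subsequent degree. Moreover you never actually prove $\langle v,u\rangle=0$ --- your attempts (pairing against $v$, adjoints, traces) all yield vacuous or different relations, as you notice. No purely formal manipulation can succeed here, because the statement is false without nilpotency: in dimension $2$ with $v=e_1$, $u=e_2$, $\langle e_1,e_2\rangle=1$, one has $\tau_{v,u}+\tau_{u,v}=\mathrm{diag}(-1,1)=x^\perp-x$ for $x=\mathrm{diag}(a,a-1)$, and $\langle v,u\rangle\neq 0$; only the non-nilpotency of $x$ rules this out.

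The paper's argument for the cross terms is structurally different and is the step your proposal is missing. One first observes that if $\langle x^iv,u\rangle=0$ for all $i$ (or if $\langle v,x^ju\rangle=0$ for all $j$), then the recursion propagates vanishing to all cross terms. Otherwise, choose $i_0$ maximal with $\langle x^{i_0}v,u\rangle\neq 0$ and $j_0$ maximal with $\langle v,x^{j_0}u\rangle\neq 0$ (these maxima exist precisely because $x$ is nilpotent); a \emph{downward} induction using the same recursion shows $\langle x^iv,x^ju\rangle=0$ whenever $i>i_0$ or $j>j_0$, and then the recursion at $(i,j)=(i_0,j_0)$ reads $0=-\langle x^{i_0}v,u\rangle\langle v,x^{j_0}u\rangle\neq 0$, a contradiction. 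To repair your write-up you would need to replace the "small lemma plus routine induction" plan for the cross terms with an argument of this top-down type.
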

\begin{proof}
The assumption $x^\perp-x=\tau_{v,u}+\tau_{u,v}$ means that for all $v_1,v_2\in V$,
\begin{equation} \label{formeqn}
\langle v_1,xv_2\rangle-\langle xv_1,v_2\rangle=\langle u,v_1\rangle\langle v,v_2\rangle+
\langle v,v_1\rangle\langle u,v_2\rangle.
\end{equation}
Setting $v_1=x^i v$, $v_2=x^j v$ in \eqref{formeqn} gives
\begin{equation}
\langle x^i v,x^{j+1} v\rangle-\langle x^{i+1}v,x^j v\rangle=\langle u,x^i v\rangle\langle v,x^j v\rangle+
\langle v,x^i v\rangle\langle u,x^j v\rangle.
\end{equation}
From this we can deduce by induction on $\max\{i,j\}$ that $\langle x^i v, x^j v\rangle=0$ for all $i,j\geq 0$ (noting that the $i=j$ case is trivially true). The proof that $\langle x^i u, x^j u\rangle=0$ is identical. Setting $v_1=x^i v$, $v_2=x^j u$ in \eqref{formeqn} then gives
\begin{equation} \label{2formeqn}
\langle x^i v,x^{j+1}u\rangle-\langle x^{i+1}v,x^j u\rangle=-\langle x^i v,u\rangle\langle v,x^j u\rangle.
\end{equation}
If we assume that $\langle x^i v,u\rangle =0$ for all $i\geq 0$, \eqref{2formeqn} allows us to deduce by induction on $j$ that $\langle x^i v, x^j u\rangle=0$ as required; similarly if we assume that $\langle v,x^j u\rangle=0$ for all $j\geq 0$. So it suffices to find a contradiction to the assumption that there is some $i_0$ such that $\langle x^{i_0} v,u\rangle \neq 0$, and some $j_0$ such that $\langle v,x^{j_0} u\rangle\neq 0$. Since $x$ is nilpotent, we can assume that $i_0$ and $j_0$ are maximal with these properties. Then by downward induction on $i$, we can prove using \eqref{2formeqn} that $\langle x^i v,x^j u\rangle=0$ whenever $i>i_0$; similarly, we can prove this whenever $j>j_0$. But then when we set $i=i_0$ and $j=j_0$ in \eqref{2formeqn}, the left-hand side is zero and the right-hand side is nonzero, giving the desired contradiction.
\end{proof}
\begin{cor} \label{nilpcor}
If $(v,u,x)\in Z$, then $x+\tau_{v,u}\in\cN\cap S$.
\end{cor}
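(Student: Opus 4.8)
The plan is to verify the two defining conditions of $\cN\cap S$ for the endomorphism $x+\tau_{v,u}$: first that it is symmetric with respect to the form (i.e.\ lies in $S$), and second that it is nilpotent.

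For the symmetry, I would simply compute $(x+\tau_{v,u})^\perp=x^\perp+\tau_{v,u}^\perp$. A direct check from the definition of $\tau_{v,u}$ shows $\tau_{v,u}^\perp=\tau_{u,v}$, since $\langle\tau_{v,u}v_1,v_2\rangle=\langle u,v_1\rangle\langle v,v_2\rangle$ is symmetric in the roles of the two rank-one data after swapping which vector is paired against $v_1$. Hence $(x+\tau_{v,u})^\perp=x^\perp+\tau_{u,v}$, and by the defining equation of $Z$ this equals $x+\tau_{v,u}+\tau_{u,v}+\tau_{u,v}-\tau_{v,u}$... so I should instead write $x^\perp+\tau_{u,v}=(x+\tau_{v,u}+\tau_{u,v})+\tau_{u,v}=x+\tau_{v,u}$ using $x^\perp-x=\tau_{v,u}+\tau_{u,v}$ directly. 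Thus $x+\tau_{v,u}\in S$.

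For nilpotency, set $y=x+\tau_{v,u}$. The image of $\tau_{v,u}$ is spanned by $v$, so modulo the subspace $\C v$ the map $y$ agrees with $x$; more usefully, I would show by induction that $y^m V\subseteq x^m V+\sum_{i<m}\C x^i v$ for all $m$, which follows because $y^{m}=y\cdot y^{m-1}$ and $\tau_{v,u}$ maps everything into $\C v$. The key point is then that $\langle x^i v,\,y^m V\rangle$ and all the relevant pairings vanish: Proposition \ref{nilpprop} gives $\langle x^i v,x^j v\rangle=\langle x^i v,x^j u\rangle=0$, and since $\tau_{v,u}$ acts on any vector $v'$ by $v'\mapsto\langle u,v'\rangle v$, each application of $\tau_{v,u}$ in expanding a power of $y$ contributes a scalar $\langle u,x^j v\rangle=-\langle x^j v,u\rangle=0$ (for $j\ge 1$; the $j=0$ term $\langle u,v\rangle$ also vanishes by Proposition \ref{nilpprop}). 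So in fact $\tau_{v,u}$ annihilates $x^j v$ for every $j\ge 0$, which means $y^m v=x^m v$ and more generally the correction terms collapse, giving $y^m V\subseteq x^m V$ by induction. Since $x$ is nilpotent, so is $y$, and therefore $y\in\cN\cap S$.

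The main obstacle is organising the inductive bookkeeping in the nilpotency step cleanly: one must be careful that when expanding $y^m=(x+\tau_{v,u})^m$ the "mixed" terms of the form $x^{a_1}\tau_{v,u}x^{a_2}\tau_{v,u}\cdots$ really do vanish, and this rests entirely on the observation that $\tau_{v,u}(x^j v)=\langle u,x^j v\rangle v=0$ for all $j\ge 0$ — which is exactly the content extracted from Proposition \ref{nilpprop} (the $\langle x^i v, x^j u\rangle=0$ and $\langle x^i v, x^j v\rangle = 0$ identities). Once that observation is in hand, everything reduces to $y^m v = x^m v$ and hence $y$ is conjugate in its action to $x$ on the relevant flag, so the nilpotency is immediate. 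I would state the symmetry computation in one line and spend the bulk of the proof on this vanishing-of-mixed-terms point.
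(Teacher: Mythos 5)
Your overall strategy is the same as the paper's: membership in $S$ is read off from the defining equation of $Z$, and nilpotency follows from the observation that $\tau_{v,u}$ annihilates the $x$-stable subspace $\C[x]v$ (because $\langle u,x^jv\rangle=0$ by Proposition \ref{nilpprop}) while mapping all of $V$ into it. Two steps as written are wrong, however, and need repair. First, the adjoint of $\tau_{v,u}$ is $-\tau_{u,v}$, not $\tau_{u,v}$: since the form is skew-symmetric, $\langle v_1,\tau_{u,v}v_2\rangle=\langle v,v_2\rangle\langle v_1,u\rangle=-\langle u,v_1\rangle\langle v,v_2\rangle=-\langle\tau_{v,u}v_1,v_2\rangle$. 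With your sign, the identity you settle on, $x^\perp+\tau_{u,v}=(x+\tau_{v,u}+\tau_{u,v})+\tau_{u,v}=x+\tau_{v,u}$, is arithmetically false unless $\tau_{u,v}=0$; with the correct sign one gets $(x+\tau_{v,u})^\perp=x^\perp-\tau_{u,v}=(x+\tau_{v,u}+\tau_{u,v})-\tau_{u,v}=x+\tau_{v,u}$ as desired. Second, your concluding claim that $y^mV\subseteq x^mV$ for $y=x+\tau_{v,u}$ is false in general: already $yV\subseteq xV+\C v$, and $v$ need not lie in $xV$. Fortunately you do not need it: the weaker containment $y^mV\subseteq x^mV+\sum_{i<m}\C x^iv$ that you establish by induction, combined with $y(x^iv)=x^{i+1}v$, already forces $y^{2N}=0$ once $x^N=0$. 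Equivalently (and this is how the paper organizes it), $W=\C[x]v$ is $y$-stable and $y$ agrees with $x$ both on $W$ and on $V/W$, so $y$ is nilpotent. With these two corrections your argument is sound and coincides in substance with the paper's proof.
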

\begin{proof}
The assumption $x^\perp-x=\tau_{v,u}+\tau_{u,v}$ is equivalent to $x+\tau_{v,u}\in S$. So we need only show that $x+\tau_{v,u}$ is nilpotent. Let $W=\C[x]v$ be the subspace of $V$ spanned by $\{v,xv,x^2v,\cdots\}$. Since $W$ is stable under $x$, and also contains the image of $\tau_{v,u}$, it is stable under $x+\tau_{v,u}$, and the endomorphisms of $V/W$ induced by $x$ and by $x+\tau_{v,u}$ are the same. Since $\langle x^i v,u\rangle=0$ for all $i$ by Proposition \ref{nilpprop}, $W$ is contained in the kernel of $\tau_{v,u}$, so the endomorphisms of $W$ induced by $x$ and by $x+\tau_{v,u}$ are the same. So $x+\tau_{v,u}$ induces nilpotent endomorphisms of $W$ and of $V/W$, and is therefore nilpotent.
\end{proof}

Recall from \cite[Section 2]{ah} and \cite[Section 2]{travkin} that the $G$-orbits in the \emph{enhanced nilpotent cone} $V\times\cN$ are in bijection with the set $\cQ_{2n}$ of bipartitions $(\mu;\nu)$ of $2n$. To say that $(\mu;\nu)$ is a \emph{bipartition} of $2n$ is to say that $\mu=(\mu_1,\mu_2,\mu_3,\cdots)$ and $\nu=(\nu_1,\nu_2,\nu_3,\cdots)$ are partitions (weakly decreasing sequences of nonnegative integers, eventually zero) such that $|\mu|+|\nu|=2n$, where $|\mu|=\mu_1+\mu_2+\mu_3+\cdots$. For $(\mu;\nu)\in\cQ_{2n}$, the corresponding orbit $\cO_{\mu;\nu}\subset V\times\cN$ consists of those pairs $(v,x)$ such that $x$ has Jordan type $\mu+\nu=(\mu_1+\nu_1,\mu_2+\nu_2,\cdots)\in\cP_{2n}$ and the endomorphism of $V/\C[x]v$ induced by $x$ has Jordan type $(\mu_2+\nu_1,\mu_3+\nu_2,\cdots)\in\cP_{2n-\mu_1}$ (which forces $\dim \C[x]v=\mu_1$). 

Similarly, by \cite{kato:exotic} and \cite[Section 6]{ah}, the $K$-orbits in the \emph{exotic nilpotent cone} $\fN=V\times(\cN\cap S)$ are in bijection with the set $\cQ_n$ of bipartitions of $n$. The orbit $\Oo_{\mu;\nu}\subset \fN$ is the intersection of $\fN$ with $\cO_{\mu\cup\mu;\nu\cup\nu}$, where $\mu\cup\mu$ denotes the duplex partition $(\mu_1,\mu_1,\mu_2,\mu_2,\cdots)$.

We define
\begin{equation} \label{q2ndef}
\cQ_{2n}'=\{(\mu;\nu)\in\cQ_{2n}\,|\,\mu_1-\mu_2+\mu_3-\mu_4+\cdots=\nu_1-\nu_2+\nu_3-\nu_4+\cdots\}.
\end{equation}
In terms of the diagrammatic representation of bipartitions used in \cite{ah}, this condition says that the number of odd-length columns to the left of the wall is the same as the number of odd-length columns to the right of the wall.
\begin{prop} \label{balanceprop}
If $(v,u,x)\in Z$, then $(v,x)\in\cO_{\mu;\nu}$ for some $(\mu;\nu)\in\cQ_{2n}'$. We then have $(v,x+\tau_{v,u})\in\Oo_{\widetilde{\mu};\widetilde{\nu}}$ where
\[
\begin{split}
\widetilde{\mu}_1&=\mu_1\\
\widetilde{\nu}_1&=\nu_1-(\mu_1-\mu_2)\\
\widetilde{\mu}_2&=(\mu_1-\mu_2+\mu_3)-(\nu_1-\nu_2)\\
\widetilde{\nu}_2&=(\nu_1-\nu_2+\nu_3)-(\mu_1-\mu_2+\mu_3-\mu_4)\\
\widetilde{\mu}_3&=(\mu_1-\mu_2+\mu_3-\mu_4+\mu_5)-(\nu_1-\nu_2+\nu_3-\nu_4)\\
\widetilde{\nu}_3&=(\nu_1-\nu_2+\nu_3-\nu_4+\nu_5)-(\mu_1-\mu_2+\mu_3-\mu_4+\mu_5-\mu_6)\\
&\ \vdots
\end{split}
\]
\end{prop}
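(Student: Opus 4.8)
The plan is to deduce both assertions from a single structural claim about how the rank-one perturbation $\tau_{v,u}$ changes the Jordan type relative to the cyclic subspace $W=\C[x]v$. For the first assertion, recall from Proposition~\ref{nilpprop} that $W$ is totally isotropic for $\langle\cdot,\cdot\rangle$, and more generally that $\langle x^iv,x^ju\rangle=0$ for all $i,j$; in particular $W$ together with the full preimage $W^\perp$ of the orthogonal complement are $x$-stable. Writing $\dim\C[x]v=\mu_1$ (the size of the first column of the bipartition $(\mu;\nu)$ attached to $(v,x)$), the sequence $\mu_1\geq\mu_2\geq\cdots$ records the column lengths of the Jordan type of $x$ on the filtration $W\subset (x W + W)\subset\cdots$ in the sense of the enhanced-nilpotent-cone combinatorics of \cite{ah}. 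The alternating sum $\mu_1-\mu_2+\mu_3-\cdots$ is exactly the number of odd-length columns left of the wall in the \cite{ah} diagram, and similarly $\nu_1-\nu_2+\cdots$ counts those on the right. To see these two counts agree, I would use the nondegeneracy of $\langle\cdot,\cdot\rangle$ on $V$ together with the isotropy statements of Proposition~\ref{nilpprop}: the form descends to a perfect pairing between the associated graded pieces of the $W$-adic filtration and those of the dual filtration, and unravelling which columns this pairing identifies shows that the parity defect on the left must be matched by that on the right. Concretely, one checks that $x$ restricted to $W$ and $x$ on $V/W^\perp$ have transpose Jordan types, and $x$ on $W^\perp/W$ is symplectic (so its odd columns come in pairs); combining these three facts with the definition of $(\mu;\nu)$ in terms of $\C[x]v$ yields precisely $\mu_1-\mu_2+\mu_3-\cdots=\nu_1-\nu_2+\nu_3-\cdots$, i.e.\ $(\mu;\nu)\in\cQ_{2n}'$.

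For the second assertion, by Corollary~\ref{nilpcor} we already know $x+\tau_{v,u}\in\cN\cap S$, so $(v,x+\tau_{v,u})\in\Oo_{\widetilde\mu;\widetilde\nu}$ for a unique $(\widetilde\mu;\widetilde\nu)\in\cQ_n$; the content is the explicit formula for $(\widetilde\mu;\widetilde\nu)$ in terms of $(\mu;\nu)$. The key input is the observation, already used in the proof of Corollary~\ref{nilpcor}, that $x$ and $x+\tau_{v,u}$ induce the \emph{same} endomorphism on $W=\C[x]v$ (since $W\subseteq\ker\tau_{v,u}$ by Proposition~\ref{nilpprop}) and the same endomorphism on $V/W$. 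Thus the perturbation only rearranges Jordan structure across the one-dimensional "bridge" where the image of $\tau_{v,u}$ sits, namely the line $\C v\subseteq W$. I would argue that $\C[x+\tau_{v,u}](v)=W$ as well, so that the first part $\widetilde\mu_1$ of the exotic bipartition — which by the \cite{kato:exotic}, \cite{ah} description equals $\dim\C[x+\tau_{v,u}]v$ measured against the duplex type $\widetilde\mu\cup\widetilde\mu$ — still equals $\mu_1$; this gives $\widetilde\mu_1=\mu_1$. The remaining entries are then determined by an inductive "peeling" argument: passing to $W^\perp/W$ (on which $x=x+\tau_{v,u}$ is already symplectic nilpotent) one reduces the size, and the alternating-sum bookkeeping relating the $\cQ_{2n}$-type of $(v,x)$ to the $\cQ_{2n-\mu_1}$-type of its image in $V/\C[x]v$ (part of the definition of $\cO_{\mu;\nu}$) translates, via the half-size duplex normalization of the exotic cone, into exactly the displayed telescoping formulas for $\widetilde\nu_1,\widetilde\mu_2,\widetilde\nu_2,\dots$.

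In practice I would organize the computation around a single combinatorial lemma: given the pair of partitions $(\mu;\nu)$ with equal alternating sums, define $\widetilde\mu,\widetilde\nu$ by the displayed formulas and verify (i) that $\widetilde\mu,\widetilde\nu$ are genuine partitions with $|\widetilde\mu|+|\widetilde\nu|=n$, using the balance condition and the inequalities $\mu_i\geq\mu_{i+1}$, $\nu_i\geq\nu_{i+1}$; and (ii) that the duplex partition $\widetilde\mu\cup\widetilde\nu$ reconstructed with multiplicities (more precisely, the Jordan type $2\widetilde\mu+2\widetilde\nu$ built from $\widetilde\mu\cup\widetilde\mu$ and $\widetilde\nu\cup\widetilde\nu$ as in the definition of $\Oo_{\mu;\nu}$) has first few column lengths matching those forced by $\dim\C[x+\tau_{v,u}]^k v$ for small $k$. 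Once this lemma is in place, the proposition follows by comparing it with a direct computation of the Jordan type of $x+\tau_{v,u}$ on a basis of $V$ adapted to the $W$-adic filtration — a computation which, because $\tau_{v,u}$ has rank one and kills $W$, amounts to tracking a single Jordan string as it migrates. The main obstacle I anticipate is precisely this bookkeeping: matching the \emph{indexing conventions} of three different parametrizations (enhanced nilpotent cone for $(v,x)$, exotic nilpotent cone with its duplex/"wall" normalization for $(v,x+\tau_{v,u})$, and the ad hoc alternating sums) so that the telescoping identities come out with the stated signs and no off-by-one errors; getting the base case $\widetilde\mu_1=\mu_1$, $\widetilde\nu_1=\nu_1-(\mu_1-\mu_2)$ exactly right, and then checking the induction closes, is where the real care is needed.
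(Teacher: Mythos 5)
Your treatment of the second assertion is essentially the paper's argument: both rest on the observation from the proof of Corollary~\ref{nilpcor} that $x$ and $x+\tau_{v,u}$ induce the same endomorphisms of $W=\C[x]v$ and of $V/W$. But to close that argument you need one precise ingredient that your sketch only gestures at: the orbit $\cO_{\rho;\sigma}$ in the enhanced nilpotent cone is characterized by the Jordan type of the ambient operator being $\rho+\sigma$ together with the Jordan type of the induced operator on $V/\C[x]v$ being $(\rho_2+\sigma_1,\rho_3+\sigma_2,\dots)$. Applying this characterization once to $(v,x)\in\cO_{\mu;\nu}$ and once to $(v,x+\tau_{v,u})\in\cO_{\widetilde{\mu}\cup\widetilde{\mu};\widetilde{\nu}\cup\widetilde{\nu}}$, and equating the two descriptions of the operator on $V/W$, produces directly the linear system $\widetilde{\mu}_1=\mu_1$, $\widetilde{\mu}_1+\widetilde{\nu}_1=\mu_2+\nu_1$, $\widetilde{\mu}_2+\widetilde{\nu}_1=\mu_3+\nu_2,\dots$, which telescopes to the displayed formulas. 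No inductive ``peeling'' through $W^\perp/W$ is needed, and without the characterization above the ``bookkeeping'' you defer to is exactly the content of the proof, not an afterthought.

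The genuine gap is in the first assertion. You propose to prove $(\mu;\nu)\in\cQ_{2n}'$ by an independent symplectic argument (isotropy of $W$, a duality between $W$ and $V/W^\perp$, self-adjointness of $x$ on $W^\perp/W$). Some of these facts are true, but the bridge from them to the balance condition is missing: the bipartition $(\mu;\nu)$ is \emph{not} read off from the Jordan types of $x$ on $W$, $W^\perp/W$ and $V/W^\perp$ in any straightforward way (note also that your filtration ``$W\subset xW+W\subset\cdots$'' collapses, since $xW\subseteq W$), and the claim that $x|_W$ and $x|_{V/W^\perp}$ have ``transpose Jordan types'' is at best a misstatement --- both are single Jordan blocks of size $\mu_1$. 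The paper avoids all of this: once the formulas for $\widetilde{\mu}_k,\widetilde{\nu}_k$ are established, the balance condition is \emph{deduced} from them, because $\widetilde{\mu}_k$ and $\widetilde{\nu}_k$ vanish for large $k$ and their expressions converge to the difference of the two alternating sums. So the logical order is the reverse of yours: prove the formulas first (for $(\mu;\nu)$ a priori only in $\cQ_{2n}$), then read off membership in $\cQ_{2n}'$ for free. As written, your first paragraph is a plan whose key step is unverified and whose concrete claims would need substantial repair.
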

\begin{proof}
Certainly we have $(v,x)\in\cO_{\mu;\nu}$ for some $(\mu;\nu)\in\cQ_{2n}$.
By Corollary \ref{nilpcor}, we know that $(v,x+\tau_{v,u})\in\Oo_{\widetilde{\mu};\widetilde{\nu}}$ for some $(\widetilde{\mu};\widetilde{\nu})\in\cQ_n$. By definition, this implies that as an element of $V\times\cN$, $(v,x+\tau_{v,u})$ belongs to the orbit $\cO_{\widetilde{\mu}\cup\widetilde{\mu};\widetilde{\nu}\cup\widetilde{\nu}}$. As seen in the proof of Corollary \ref{nilpcor}, $x+\tau_{v,u}$ acts on $\C[x]v$ and $V/\C[x]v$ in the same way as $x$ does, so we must have
\begin{equation} \label{balanceeqn}
\begin{split}
\widetilde{\mu}_1&=\mu_1\\
\widetilde{\mu}_1+\widetilde{\nu}_1&=\mu_2+\nu_1\\
\widetilde{\mu}_2+\widetilde{\nu}_1&=\mu_3+\nu_2\\
\widetilde{\mu}_2+\widetilde{\nu}_2&=\mu_4+\nu_3\\
\widetilde{\mu}_3+\widetilde{\nu}_2&=\mu_5+\nu_4\\
\widetilde{\mu}_3+\widetilde{\nu}_3&=\mu_6+\nu_5\\
&\ \vdots
\end{split}
\end{equation}
This obviously implies the formulas for $\widetilde{\mu}_i,\widetilde{\nu}_i$ given in the statement. Then from the fact that $\widetilde{\mu}_i=\widetilde{\nu}_i=0$ for sufficiently large $i$ we deduce that $(\mu;\nu)\in\cQ_{2n}'$.
\end{proof}
Note that if $v=0$, then $\mu=\widetilde{\mu}=\emptyset$ and $\nu$ is the duplex partition $\widetilde{\nu}\cup\widetilde{\nu}$.

Let $\Psi:\cQ_{2n}'\to\cQ_n:(\mu;\nu)\mapsto(\widetilde{\mu};\widetilde{\nu})$ be the map defined in Proposition \ref{balanceprop}. For fixed $(\widetilde{\mu};\widetilde{\nu})\in\cQ_n$, $\Psi^{-1}(\widetilde{\mu};\widetilde{\nu})$ is the subset of $\cQ_{2n}$ consisting of all $(\mu;\nu)$ satisfying \eqref{balanceeqn}. In the notation of \cite[Section 7]{ahj}, this set is
$\cQ_{\widetilde{\mu}_1,(\widetilde{\mu}_1+\widetilde{\nu}_1,\widetilde{\mu}_2+\widetilde{\nu}_1,\widetilde{\mu}_2+\widetilde{\nu}_2,
\widetilde{\mu}_3+\widetilde{\nu}_2,\cdots)}$.

Recall from \cite[Definition 3.6]{ah} the partial order $\leq$ on $\cQ_{2n}$ which corresponds to the closure order on orbits in the enhanced nilpotent cone, Namely, $(\rho;\sigma)\leq(\mu;\nu)$ if and only if the following
inequalities hold for all $k\geq 0$:
\begin{equation}
\begin{split}
\rho_1+\sigma_1+\rho_2+\sigma_2+\cdots+\rho_k+\sigma_k
&\leq \mu_1+\nu_1+\mu_2+\nu_2+\cdots+\mu_k+\nu_k,\text{ and}\\
\rho_1+\sigma_1+\cdots+\rho_k+\sigma_k+\rho_{k+1}
&\leq \mu_1+\nu_1+\cdots+\mu_k+\nu_k+\mu_{k+1}.
\end{split}
\end{equation}
As observed in \cite[Proposition 7.2(2)]{ahj}, the restriction of this partial order to $\Psi^{-1}(\widetilde{\mu};\widetilde{\nu})$ is given by
\begin{equation}
(\rho;\sigma)\leq(\mu;\nu)\Longleftrightarrow\sigma_i\leq\nu_i\text{ for all }i\geq 1.
\end{equation}
Note that $(\widetilde{\mu}\cup\widetilde{\mu};\widetilde{\nu}\cup\widetilde{\nu})$ belongs to $\Psi^{-1}(\widetilde{\mu};\widetilde{\nu})$, and is in general neither a minimal nor a maximal element for this partial order.
\begin{exam}
If $n=2$ and $\widetilde{\mu}=\widetilde{\nu}=(1)$, then $\Psi^{-1}(\widetilde{\mu};\widetilde{\nu})$ consists of the three bipartitions $(1^3;1),(1^2;1^2),(1;21)$, on which the partial order is a total order.
\end{exam}

For any $(\mu;\nu)\in\cQ_{2n}'$, we define
\begin{equation}
Z_{\mu;\nu}=\{(v,u,x)\in Z\,|\,(v,x)\in\cO_{\mu;\nu}\}.
\end{equation}
Thus $Z$ is the disjoint union of the locally closed subvarieties $Z_{\mu;\nu}$. It is not immediately clear that every $Z_{\mu;\nu}$ is nonempty; this will be shown in Corollary \ref{dimcor}. 

For any $(\widetilde{\mu};\widetilde{\nu})\in\cQ_n$, define
\begin{equation}
Z^{\widetilde{\mu};\widetilde{\nu}}=\{(v,u,x)\in Z\,|\,(v,x+\tau_{v,u})\in\Oo_{\widetilde{\mu};\widetilde{\nu}}\}.
\end{equation}
Then by Proposition \ref{balanceprop} we have
\begin{equation}
Z^{\widetilde{\mu};\widetilde{\nu}}=\bigcup_{(\mu;\nu)\in\Psi^{-1}(\widetilde{\mu};\widetilde{\nu})} Z_{\mu;\nu}.
\end{equation}
For any $(\mu;\nu)\in\Psi^{-1}(\widetilde{\mu};\widetilde{\nu})$, we define
\begin{equation}
Z^{\widetilde{\mu};\widetilde{\nu}}_{\leq(\mu;\nu)}=\bigcup_{\substack{(\rho;\sigma)\in\Psi^{-1}(\widetilde{\mu};\widetilde{\nu})\\(\rho;\sigma)\leq(\mu;\nu)}} Z_{\rho;\sigma},
\end{equation}
which is clearly a closed subvariety of $Z^{\widetilde{\mu};\widetilde{\nu}}$.
\begin{prop} \label{bundleprop}
For any $(\widetilde{\mu};\widetilde{\nu})\in\cQ_n$ and $(\mu;\nu)\in\Psi^{-1}(\widetilde{\mu};\widetilde{\nu})$, the map
\[
\psi:Z^{\widetilde{\mu};\widetilde{\nu}}_{\leq(\mu;\nu)}\to\Oo_{\widetilde{\mu};\widetilde{\nu}}:(v,u,x)\mapsto(v,x+\tau_{v,u})
\]
is a fibre bundle whose fibres are isomorphic to affine space of some dimension $d_{\mu;\nu}$ \textup{(}or else, for the moment, there is the possibility that $Z^{\widetilde{\mu};\widetilde{\nu}}_{\leq(\mu;\nu)}$ is empty\textup{)}. 
\end{prop}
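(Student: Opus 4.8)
The plan is to exploit the $K$-equivariance of $\psi$ to reduce to a single fibre, and then to identify that fibre explicitly. The natural $K$-action $k\cdot(v,u,x)=(kv,ku,kxk^{-1})$ on $V\times V\times\cN$ preserves $Z$ (using $k^\perp=k^{-1}$ and $\tau_{kv,ku}=k\tau_{v,u}k^{-1}$), preserves each stratum $Z_{\mu;\nu}$ and hence each $Z^{\widetilde\mu;\widetilde\nu}_{\leq(\mu;\nu)}$, and makes $\psi$ equivariant for the $K$-action $k\cdot(v,y)=(kv,kyk^{-1})$ on $\fN$. Since $\Oo_{\widetilde\mu;\widetilde\nu}$ is a single $K$-orbit with connected stabilizer, it suffices to analyse one fibre $F:=\psi^{-1}(v_0,y_0)$ for a chosen $(v_0,y_0)\in\Oo_{\widetilde\mu;\widetilde\nu}$: writing $H$ for the stabilizer of $(v_0,y_0)$ in $K$, the source $Z^{\widetilde\mu;\widetilde\nu}_{\leq(\mu;\nu)}$ is the associated bundle $K\times^{H}F$ over $\Oo_{\widetilde\mu;\widetilde\nu}=K/H$. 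Once we show that $F$ is an $H$-stable affine subspace of a linear representation of $H$ (or is empty), $\psi$ is automatically a fibre bundle with fibre $F$: indeed $K\times^{H}F$ is then a torsor under the vector bundle associated to the linear part of the $H$-action on $F$, and such torsors are Zariski-locally trivial.

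So fix $(v_0,y_0)\in\Oo_{\widetilde\mu;\widetilde\nu}$. If $v_0=0$ — which happens exactly when $\widetilde\mu=\emptyset$ — then $\tau_{v_0,u}=0$ for all $u$, so $F=\{0\}\times V\times\{y_0\}\cong V$ and the assertion holds with $d_{\mu;\nu}=2n$; assume henceforth $v_0\neq0$, so that $u\mapsto\tau_{v_0,u}$ is injective. A point of $F$ is then a triple $(v_0,u,x)$ with $x=y_0-\tau_{v_0,u}$, and since $y_0^\perp=y_0$ while $\tau_{v_0,u}^\perp=-\tau_{u,v_0}$, the relation $x^\perp-x=\tau_{v_0,u}+\tau_{u,v_0}$ holds for every $u$, so membership in $Z$ is equivalent to nilpotency of $x$. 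By the matrix determinant lemma, $\det(tI-x)=t^{2n}\bigl(1+\langle u,(tI-y_0)^{-1}v_0\rangle\bigr)=t^{2n}+\sum_{k\geq 0}\langle u,y_0^{k}v_0\rangle\,t^{2n-1-k}$, so $x$ is nilpotent if and only if $u$ lies in $W^\perp$, where $W:=\C[y_0]v_0$; this is a linear subspace of $V$ of dimension $2n-\widetilde\mu_1$, stable under $H$ (which fixes $v_0$ and commutes with $y_0$). For such $u$ we have $(v_0,u,x)\in Z$, so Proposition \ref{balanceprop} forces $(v_0,x)$ into an orbit $\cO_{\rho;\sigma}$ with $(\rho;\sigma)\in\Psi^{-1}(\widetilde\mu;\widetilde\nu)$, and within that set the condition $(\rho;\sigma)\leq(\mu;\nu)$ reads $\sigma_i\leq\nu_i$ for all $i$ and coincides with the closure order. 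Hence $F$ is the set of $u\in W^\perp$ with $(v_0,y_0-\tau_{v_0,u})\in\overline{\cO_{\mu;\nu}}$; equivalently, transporting along the affine-linear embedding $u\mapsto(v_0,y_0-\tau_{v_0,u})$, it is the intersection of the affine ``slice'' $\{v_0\}\times\{\,y_0-\tau_{v_0,u}\mid u\in W^\perp\,\}$ with the orbit closure $\overline{\cO_{\mu;\nu}}$ in the enhanced nilpotent cone.

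The remaining and main step is to show that this intersection is empty or an affine subspace of $W^\perp$ of dimension $d_{\mu;\nu}$. For this I would put $(v_0,y_0)$ in Kato's normal form: $V$ decomposes into pairs of Jordan strings for $y_0$, the symplectic form pairs the two strings in each pair, and $v_0$ is a sum of prescribed basis vectors, one near the foot of each ``$\widetilde\mu$-part'' string. In the resulting coordinates both $W=\C[y_0]v_0$ and $W^\perp$ are coordinate subspaces, $x=y_0-\tau_{v_0,u}$ is an explicit matrix depending linearly on the coordinates of $u$, and one reads off the enhanced type $(\rho(u);\sigma(u))$ of $(v_0,x)$ from the ranks of the maps induced by the powers $x^{j}$ on $V$ and on $V/\C[x]v_0$. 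The claim to verify is that imposing $\sigma(u)_i\leq\nu_i$ successively for $i=1,2,\dots$ amounts to setting certain coordinates of $u$ equal to explicit affine-linear functions of the remaining ones, so that $F$ is a coordinate-affine subspace whose dimension $d_{\mu;\nu}$ is the number of free coordinates. (For instance, when $n=2$ and $\widetilde\mu=\widetilde\nu=(1)$, the three admissible $(\mu;\nu)$, from the maximal to the minimal, give $F\cong\mathbb{A}^{3},\mathbb{A}^{2},\mathbb{A}^{1}$.) I expect this last verification — distinguishing a genuinely affine-linear cut-out from a merely determinantal one — to be the only place requiring real work; it should come down to ordering the Jordan strings and the coordinates of $u$ appropriately, together with the structure of $\Psi^{-1}(\widetilde\mu;\widetilde\nu)$ and its partial order recalled above.
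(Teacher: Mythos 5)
Your setup is sound and matches the paper's up to the point where the real work begins: the reduction by $K$-equivariance to a single fibre, the identification of that fibre with $\{u\in W^\perp\mid (v_0,y_0-\tau_{v_0,u})\in\overline{\cO_{\mu;\nu}}\}$ where $W=\C[y_0]v_0$ (your characteristic-polynomial argument for ``nilpotent iff $u\in W^\perp$'' is a pleasant alternative to the paper's use of Proposition \ref{nilpprop}), and the use of \cite[Proposition 7.2(2)]{ahj} to identify $\leq(\mu;\nu)$ inside $\Psi^{-1}(\widetilde{\mu};\widetilde{\nu})$ with the closure order. But the proposition's entire content is the assertion that this fibre is an \emph{affine subspace}, and that is exactly the step you defer: you state it as ``the claim to verify,'' propose to check it in Kato's normal form by computing the type $(\rho(u);\sigma(u))$ from ranks of powers of $y_0-\tau_{v_0,u}$, and concede that ``distinguishing a genuinely affine-linear cut-out from a merely determinantal one'' is the only place requiring real work. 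That concern is well founded and is not resolved by your outline: the conditions $\sigma(u)_i\leq\nu_i$ are rank inequalities, which a priori define determinantal loci, not linear ones, and nothing in your coordinate scheme explains why they should linearize.

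The paper closes this gap with two specific ingredients. First, for $u\in W^\perp$ one has $\im(\tau_{v_0,u})\subseteq W\subseteq\ker(\tau_{v_0,u})$, whence
\begin{equation*}
(y_0-\tau_{v_0,u})^k=y_0^k-\sum_{j=1}^{k}y_0^{k-j}\tau_{v_0,u}\,y_0^{j-1},
\end{equation*}
so every power of $y_0-\tau_{v_0,u}$ is \emph{affine-linear} in $u$ (no higher-order terms survive). Second, \cite[Proposition 7.2(3)]{ahj} recasts the closure condition $(v_0,y_0-\tau_{v_0,u})\in\overline{\cO_{\mu;\nu}}$ --- for points already known to lie in the fixed family $\Psi^{-1}(\widetilde{\mu};\widetilde{\nu})$ --- not as rank inequalities but as \emph{vanishing} conditions $(y_0-\tau_{v_0,u})^{\mu_1+\nu_i}\bigl((y_0^{\mu_{i+1}+\nu_i})^{-1}(W)\bigr)=0$ for all $i\geq 1$, where the subspaces being annihilated depend only on $y_0$ and $W$, not on $u$. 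Combining the two, the fibre is cut out by equations affine-linear in $u$, hence is an affine subspace or empty. Without an input of the second kind your argument does not go through, so as it stands the proposal has a genuine gap at its central step.
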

\begin{proof}
Fix $(v,y)\in\Oo_{\widetilde{\mu};\widetilde{\nu}}$, and let $W=\C[y]v$ (whose dimension is $\widetilde{\mu}_1=\mu_1$). Then for $u\in V$, the condition that $y-\tau_{v,u}$ is nilpotent is equivalent to the condition that $u\in W^\perp$, by Proposition \ref{nilpprop} and the proof of Corollary \ref{nilpcor}. If this condition holds, then $\im(\tau_{v,u})\subseteq W\subseteq\ker(\tau_{v,u})$, so
\begin{equation}
(y-\tau_{v,u})^k=y^k-\sum_{j=1}^{k}y^{k-j}\tau_{v,u}y^{j-1},\text{ for all }k\geq 0.
\end{equation}
Now the fibre $\psi^{-1}(v,y)$ can be identified with $\{u\in W^\perp\,|\,(v,y-\tau_{v,u})\in\overline{\cO_{\mu;\nu}}\}$. It follows immediately from \cite[Proposition 7.2(3)]{ahj} that for $u\in W^\perp$, the condition $(v,y-\tau_{v,u})\in\overline{\cO_{\mu;\nu}}$ is equivalent to
\begin{equation}
(y-\tau_{v,u})^{\mu_1+\nu_i}((y^{\mu_{i+1}+\nu_i})^{-1}(W))=0,\text{ for all }i\geq 1.
\end{equation}
Since the matrix coefficients of $(y-\tau_{v,u})^k$ are affine-linear functions of the coefficients of $u$, these equations define an affine-linear subspace of $W^\perp$ (conceivably empty). As $(v,y)$ ranges over the $K$-orbit $\Oo_{\widetilde{\mu};\widetilde{\nu}}$, the fibres $\psi^{-1}(v,y)$ clearly fit together into a bundle as required.
\end{proof}
\begin{cor} \label{irredcor}
$Z_{\mu;\nu}$ is an irreducible variety of dimension
\[
2n^2+d_{\mu;\nu}-2\nu_2-2\mu_3-2\nu_3-2\mu_4-4\nu_4-4\mu_5-4\nu_5-4\mu_6-6\nu_6-\cdots
\]
\textup{(}or else, for the moment, there is the possibility that $Z_{\mu;\nu}$ is empty\textup{)}.
\end{cor}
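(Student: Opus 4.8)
The plan is to read off both assertions directly from Proposition~\ref{bundleprop}. Write $(\widetilde{\mu};\widetilde{\nu})=\Psi(\mu;\nu)$, so that $Z_{\mu;\nu}$ is one of the strata making up $Z^{\widetilde{\mu};\widetilde{\nu}}_{\leq(\mu;\nu)}$. If $Z^{\widetilde{\mu};\widetilde{\nu}}_{\leq(\mu;\nu)}$ is empty, then so is its subvariety $Z_{\mu;\nu}$, and there is nothing to prove. Otherwise Proposition~\ref{bundleprop} presents $\psi\colon Z^{\widetilde{\mu};\widetilde{\nu}}_{\leq(\mu;\nu)}\to\Oo_{\widetilde{\mu};\widetilde{\nu}}$ as a fibre bundle all of whose fibres are isomorphic to affine $d_{\mu;\nu}$-space. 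The base $\Oo_{\widetilde{\mu};\widetilde{\nu}}$ is a single $K$-orbit, hence irreducible; covering it by opens over which $\psi$ is trivial and noting that any two such opens meet (the base being irreducible), I conclude that $Z^{\widetilde{\mu};\widetilde{\nu}}_{\leq(\mu;\nu)}$ is irreducible of dimension $\dim\Oo_{\widetilde{\mu};\widetilde{\nu}}+d_{\mu;\nu}$.

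Next I would check that $Z_{\mu;\nu}$ is open in $Z^{\widetilde{\mu};\widetilde{\nu}}_{\leq(\mu;\nu)}$. Its complement there is $\bigcup_{(\rho;\sigma)<(\mu;\nu)}Z_{\rho;\sigma}$, the union taken over $(\rho;\sigma)\in\Psi^{-1}(\widetilde{\mu};\widetilde{\nu})$; since $\Psi^{-1}(\widetilde{\mu};\widetilde{\nu})$ is finite and $\{(\rho;\sigma)<(\mu;\nu)\}$ is a down-set in that poset, this complement equals the union of the closed subvarieties $Z^{\widetilde{\mu};\widetilde{\nu}}_{\leq(\rho;\sigma)}$ over the finitely many maximal elements $(\rho;\sigma)$ strictly below $(\mu;\nu)$, and is therefore closed. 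Hence $Z_{\mu;\nu}$ is either empty (nothing to prove) or a nonempty, hence dense, open subvariety of the irreducible variety $Z^{\widetilde{\mu};\widetilde{\nu}}_{\leq(\mu;\nu)}$; in the latter case it is irreducible of dimension $\dim\Oo_{\widetilde{\mu};\widetilde{\nu}}+d_{\mu;\nu}$. This also explains how the ``$d_{\mu;\nu}$, or else empty'' dichotomy of Proposition~\ref{bundleprop} propagates to $Z_{\mu;\nu}$.

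It remains to identify $\dim\Oo_{\widetilde{\mu};\widetilde{\nu}}$ with the alternating-sum expression $2n^2-2\nu_2-2\mu_3-2\nu_3-2\mu_4-4\nu_4-\cdots$ of the statement, in which the $\mu_i,\nu_i$ are the original parts, related to $\widetilde{\mu}_i,\widetilde{\nu}_i$ by the equations \eqref{balanceeqn}. For this I would start from the known formula for the dimension of an exotic nilpotent orbit --- from \cite{kato:exotic}, or via the identification $\Oo_{\widetilde{\mu};\widetilde{\nu}}=\fN\cap\cO_{\widetilde{\mu}\cup\widetilde{\mu};\widetilde{\nu}\cup\widetilde{\nu}}$ together with the dimension formula for enhanced nilpotent orbits in \cite{ah} and the halving of orbit dimensions in a symmetric space --- express it in terms of $\widetilde{\mu}_i,\widetilde{\nu}_i$, substitute \eqref{balanceeqn}, and simplify. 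As a sanity check, when $v=0$ (so $\mu=\emptyset$ and $\nu=\widetilde{\nu}\cup\widetilde{\nu}$) the expression should collapse, using the identity $\sum_i(\widetilde{\nu}^t_i)^2=\sum_k(2k-1)\widetilde{\nu}_k$, to $\dim\Oo_{\emptyset;\widetilde{\nu}}=2n^2-2\sum_i(\widetilde{\nu}^t_i)^2$, which is half the dimension of the $GL(V)$-orbit of a nilpotent of Jordan type $\widetilde{\nu}\cup\widetilde{\nu}$. The conceptual content is entirely in Proposition~\ref{bundleprop}; the main obstacle here is this purely combinatorial bookkeeping through \eqref{balanceeqn}, making sure the exotic-orbit dimension formula, once substituted, lands on precisely the displayed alternating sums.
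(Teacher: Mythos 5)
Your proposal follows the paper's proof essentially verbatim: irreducibility and the dimension count come from the affine-space bundle of Proposition~\ref{bundleprop} over the irreducible orbit $\Oo_{\widetilde{\mu};\widetilde{\nu}}$, openness of $Z_{\mu;\nu}$ in $Z^{\widetilde{\mu};\widetilde{\nu}}_{\leq(\mu;\nu)}$ handles the passage to the stratum, and the alternating-sum expression is obtained by substituting \eqref{balanceeqn} into the exotic orbit dimension formula (the paper cites \cite[Theorem 2.20]{ahs}, $\dim\Oo_{\widetilde{\mu};\widetilde{\nu}}=2n^2-2\widetilde{\nu}_1-4\widetilde{\mu}_2-6\widetilde{\nu}_2-\cdots$). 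The one piece you leave as ``bookkeeping'' does check out exactly as you predict, so the argument is complete in substance and identical in route.
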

\begin{proof}
Certainly the orbit $\Oo_{\widetilde{\mu};\widetilde{\nu}}$ is irreducible.
By \cite[Theorem 2.20]{ahs} and \eqref{balanceeqn},
\begin{equation}
\begin{split}
\dim\Oo_{\widetilde{\mu};\widetilde{\nu}}&=2n^2-2\widetilde{\nu}_1-4\widetilde{\mu}_2-6\widetilde{\nu}_2-8\widetilde{\mu}_3-10\widetilde{\nu}_3-\cdots\\
&=2n^2-2(\widetilde{\mu}_2+\widetilde{\nu}_1)-2(\widetilde{\mu}_2+\widetilde{\nu}_2)-4(\widetilde{\mu}_3+\widetilde{\nu}_2)-4(\widetilde{\mu}_3+\widetilde{\nu}_3)-\cdots\\
&=2n^2-2(\mu_3+\nu_2)-2(\mu_4+\nu_3)-4(\mu_5+\nu_4)-4(\mu_6+\nu_5)-\cdots\\
&=2n^2-2\nu_2-2\mu_3-2\nu_3-2\mu_4-4\nu_4-4\mu_5-4\nu_5-4\mu_6-\cdots.
\end{split}
\end{equation}
By Proposition \ref{bundleprop}, $Z^{\widetilde{\mu};\widetilde{\nu}}_{\leq(\mu;\nu)}$ is an irreducible variety of dimension equal to this plus $d_{\mu;\nu}$ (or is empty).
Since $Z_{\mu;\nu}$ is an open subvariety of $Z^{\widetilde{\mu};\widetilde{\nu}}_{\leq(\mu;\nu)}$, the result follows. 
\end{proof}

Now we return to the variety $Y$. Recall that for any $x\in\cN$, the Springer fibre $(G/B)_x$ is the variety of flags $V_\bullet\in G/B$ such that $x(V_i)\subseteq V_{i-1}$ for all $i$. To any such flag we can associate the sequence of partitions which give the Jordan types of $x|_{V_1},x|_{V_2},\cdots$. These can be regarded as a standard tableau of shape $\lambda$, where $\lambda\in\cP_{2n}$ is the Jordan type of $x$. In this way, $(G/B)_x$ is the disjoint union of locally closed subvarieties $(G/B)_x^T$ where $T$ runs over $\SYT(\lambda)$. As shown by Spaltenstein \cite{spaltenstein} and Steinberg \cite{st1}, each one of these subvarieties is irreducible of dimension
\begin{equation} \label{spaltdimeqn}
\lambda_2+2\lambda_3+3\lambda_4+4\lambda_5+\cdots,
\end{equation}
and their closures are exactly the irreducible components of $(G/B)_x$.

Let $\cT_{2n}'$ be the set of pairs $((\mu;\nu),T)$ where $(\mu;\nu)\in\cQ_{2n}'$ and $T\in\SYT(\mu+\nu)$. For any $((\mu;\nu),T)\in\cT_{2n}'$, we define
\begin{equation}
Y_{\mu;\nu}^T=\{(v,u,V_\bullet,x)\in Y\,|\,(v,x)\in\cO_{\mu;\nu},V_\bullet\in(G/B)_x^T\}.
\end{equation}
Thus $Y$ is the disjoint union of the locally closed subvarieties $Y_{\mu;\nu}^T$ (some of which, for the moment, are conceivably empty). 

The main result of this section is:
\begin{thm} \label{irredcompthm}
Each $Y_{\mu;\nu}^T$ is nonempty and irreducible of dimension $2n^2+n$. Hence the irreducible components of $Y$ are the closures $\overline{Y_{\mu;\nu}^T}$ as $((\mu;\nu),T)$ runs over $\cT_{2n}'$. 
\end{thm}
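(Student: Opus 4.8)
The strategy is to compute $\dim Y_{\mu;\nu}^T$ directly by fibering it over the base $Z_{\mu;\nu}$ via the projection $\pi$, and to match the count against the known dimension $2n^2+n$ of $V\times G/B$, deducing nonemptiness as a by-product. Recall that $\pi$ restricts to a map $Y_{\mu;\nu}^T\to Z_{\mu;\nu}$ whose fibre over $(v,u,x)$ is $(G/B)_x^T$, the Spaltenstein--Steinberg stratum of the Springer fibre; by \eqref{spaltdimeqn} this fibre is irreducible of dimension $\lambda_2+2\lambda_3+3\lambda_4+\cdots$ where $\lambda=\mu+\nu$ is the Jordan type of $x$. So first I would argue that $Y_{\mu;\nu}^T$ is (if nonempty) irreducible of dimension $\dim Z_{\mu;\nu}+(\lambda_2+2\lambda_3+3\lambda_4+\cdots)$, using that $Z_{\mu;\nu}$ is irreducible by Corollary \ref{irredcor} and that the map $\pi$ restricted to this stratum is a locally trivial fibration (this last point needs a sentence: one can trivialize over the $K$-orbit structure, since $Z_{\mu;\nu}$ is not a single orbit but the Springer stratification varies algebraically).

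\textbf{The dimension bookkeeping.} Next I would substitute the formula for $\dim Z_{\mu;\nu}$ from Corollary \ref{irredcor}, namely
\[
2n^2+d_{\mu;\nu}-2\nu_2-2\mu_3-2\nu_3-2\mu_4-4\nu_4-4\mu_5-4\nu_5-4\mu_6-\cdots,
\]
and add $\lambda_2+2\lambda_3+3\lambda_4+\cdots=(\mu_2+\nu_2)+2(\mu_3+\nu_3)+3(\mu_4+\nu_4)+\cdots$. The claim is that the sum equals $2n^2+n$. This forces an identity
\[
d_{\mu;\nu}=n-(\mu_2+\nu_2)-2(\mu_3+\nu_3)-\cdots+2\nu_2+2\mu_3+2\nu_3+2\mu_4+4\nu_4+\cdots,
\]
i.e. it pins down the fibre dimension $d_{\mu;\nu}$ of the affine bundle $\psi$ from Proposition \ref{bundleprop}. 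So the real content is to compute $d_{\mu;\nu}$ independently and check it matches. I would do this by the explicit description of the fibre $\psi^{-1}(v,y)$ given in the proof of Proposition \ref{bundleprop}: it is the affine-linear subspace of $W^\perp$ (where $W=\C[y]v$, $\dim W=\mu_1$) cut out by the equations $(y-\tau_{v,u})^{\mu_1+\nu_i}\big((y^{\mu_{i+1}+\nu_i})^{-1}(W)\big)=0$ for $i\geq 1$. One computes $\dim W^\perp=2n-\mu_1$, then subtracts the (generically independent) number of linear conditions imposed, using the Jordan structure of $y$ on $V/W$ (which has type $(\mu_2+\nu_1,\mu_3+\nu_2,\cdots)$ by the definition of $\Oo_{\widetilde\mu;\widetilde\nu}$ together with \eqref{balanceeqn}). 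This is a rank computation for a system of linear equations indexed by the Jordan blocks, and it is the kind of calculation done in \cite{ah} and \cite{ahj} for the enhanced case; alternatively one can cite \cite[Proposition 7.2]{ahj} more forcefully to read off $d_{\mu;\nu}$ as the dimension of the relevant orbit closure stratum in a single Jordan-block fibre.

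\textbf{Main obstacle.} The hard part is precisely this determination of $d_{\mu;\nu}$ and the verification that the resulting dimension is constant (equal to $2n^2+n$) over all of $\cQ_{2n}'$ and all $T\in\SYT(\mu+\nu)$ — in particular that it does not depend on $T$. The $T$-independence is forced because all the strata $Y_{\mu;\nu}^T$ sit inside $Y$, which has pure dimension $2n^2+n$ (being a union of conormal bundles $Y_{w,\alpha}$, each of that dimension), and their closures must be unions of the $\overline{Y_{w,\alpha}}$; but to run the argument cleanly one wants the dimension count to come out right on the nose from the formula, which is why getting $d_{\mu;\nu}$ exactly matters. Once the dimension equals $2n^2+n=\dim Y$ and the stratum is irreducible and nonempty, each $\overline{Y_{\mu;\nu}^T}$ is an irreducible component of $Y$; since the $Y_{\mu;\nu}^T$ partition $Y$ and there are $|\cT_{2n}'|$ of them, and the $\overline{Y_{w,\alpha}}$ also exhaust the components, counting (or the fact that distinct strata of full dimension give distinct components) finishes the identification. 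Nonemptiness of every $Y_{\mu;\nu}^T$ — equivalently of every $Z_{\mu;\nu}$, as promised before Corollary \ref{dimcor} — then also follows, since a component of $Y$ must meet some conormal bundle $Y_{w,\alpha}$ and hence be nonempty, so the affine bundle $Z^{\widetilde\mu;\widetilde\nu}_{\leq(\mu;\nu)}$ of Proposition \ref{bundleprop} is genuinely nonempty, forcing $Z_{\mu;\nu}$ (its open stratum) to be nonempty too.
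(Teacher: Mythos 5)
Your overall framework (fibre $Y_{\mu;\nu}^T\to Z_{\mu;\nu}$ with Springer-stratum fibres, add \eqref{spaltdimeqn} to the dimension from Corollary \ref{irredcor}, conclude that a full-dimensional irreducible stratum is dense in a component of $Y$) matches the paper's, but the step you yourself flag as the main obstacle --- determining $d_{\mu;\nu}$ by a direct rank computation on the affine-linear system cutting out $\psi^{-1}(v,y)$ --- is left undone, and it is precisely the step the paper does \emph{not} perform. A direct computation would have to establish not only the rank of that system but also its consistency (the paper is careful to allow ``conceivably empty'' at every stage), and neither \cite[Proposition 7.2]{ahj} nor the computations of \cite{ah} hand you this; the value $d_{\mu;\nu}=|\nu|$ appears in the paper only as Corollary \ref{dimcor}, i.e.\ as a \emph{consequence} of Theorem \ref{irredcompthm}, not as an input to it. Your closing nonemptiness argument is also not sound as stated: ``a component of $Y$ must meet some conormal bundle and hence be nonempty'' shows that components are nonempty (which is trivial), not that every stratum $Y_{\mu;\nu}^T$ is nonempty --- a priori some strata could be empty and their closures would then simply fail to be components.

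The paper's actual mechanism is a counting argument that sidesteps $d_{\mu;\nu}$ entirely. Since each $Y_{\mu;\nu}^T$ is irreducible (possibly empty) and $Y=\bigcup_{(w,\alpha)}\overline{Y_{w,\alpha}}$ with each $\overline{Y_{w,\alpha}}$ an irreducible component of dimension $2n^2+n$, one gets a map $f\colon\cT_{2n}'\to R_{2n}'$ with $Y_{\mu;\nu}^T\subseteq\overline{Y_{f((\mu;\nu),T)}}$, and $f$ is surjective because each component must contain a dense stratum. The whole theorem then reduces to the purely combinatorial identity $|\cT_{2n}'|=|R_{2n}'|$, which forces $f$ to be a bijection, hence each stratum to be nonempty and dense in its component, of dimension exactly $2n^2+n$. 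That identity is the real content and is nontrivial: it is proved by comparing Matsuki's orbit count (Proposition \ref{numberprop}) with the dimensions in the representation-theoretic decomposition of Proposition \ref{repprop}, whose proof uses the Pieri rule and an explicit bijection between bipartitions $(\mu;\nu)\in\cQ_{2n}'$ with $\mu+\nu=\lambda$ and pairs of removable horizontal strips of $\lambda$. Your proposal contains a one-clause gesture toward ``counting'' but never identifies this identity or indicates how to prove it, so the proof is incomplete on either of the routes you offer.
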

\begin{proof}
It is clear that $\pi:Y_{\mu;\nu}^T\to Z_{\mu;\nu}$ is a fibre bundle whose fibres are isomorphic to $(G/B)_x^T$ where $x\in\cN$ has Jordan type $\mu+\nu$. By Corollary \ref{irredcor} and \eqref{spaltdimeqn}, $Y_{\mu;\nu}^T$ is an irreducible variety of dimension
\begin{equation} \label{dimeqn}
2n^2+d_{\mu;\nu}+\mu_2-\nu_2+\mu_4-\nu_4+\mu_6-\nu_6+\cdots
\end{equation}
\textup{(}or else, for the moment, there is the possibility that $Y_{\mu;\nu}^T$ is empty\textup{)}. Hence we can define a map $f:\cT_{2n}'\to R_{2n}'$ such that $Y_{\mu;\nu}^T\subseteq\overline{Y_{f((\mu;\nu),T)}}$. Since $Y=\bigcup Y_{\mu;\nu}^T$, $f$ must be surjective. If we can show that $|\cT_{2n}'|=|R_{2n}'|$, then it will follow that $f$ is bijective, implying that $Y_{\mu;\nu}^T$ is dense in $\overline{Y_{f((\mu;\nu),T)}}$ for all $((\mu;\nu),T)\in\cT_{2n}'$, which gives the result. By Proposition \ref{numberprop}, we are reduced to showing that
\begin{equation}
|\cT_{2n}'|=\sum_{j=0}^n \frac{(2n)!}{2^{n-j}(n-j)!(j!)^2}.
\end{equation}
This follows by taking dimensions of both sides in Proposition \ref{repprop} below, and using the well-known fact that $\dim V_\lambda=|\SYT(\lambda)|$.
\end{proof}

For $\lambda\in\cP_{2n}$, let $V_{\lambda}$ denote the irreducible complex representation of $S_{2n}$ labelled by $\lambda$ (with the usual convention that $V_{(2n)}$ is the trivial representation and $V_{(1^{2n})}$ the sign representation).
\begin{prop} \label{repprop}
We have an isomorphism of representations of $S_{2n}$,
\[
\bigoplus_{(\mu;\nu)\in\cQ_{2n}'}V_{\mu+\nu}
\;\cong\; \bigoplus_{j=0}^n\,\Ind_{W(C_{n-j})\times S_j\times S_j}^{S_{2n}}(\C_\delta),
\]
where $W(C_{n-j})$ is the subgroup of $S_{2n-2j}$ centralizing some fixed-point-free involution, $\delta$ is the linear character of $W(C_{n-j})$ obtained by restricting the sign character of $S_{2n-2j}$, and $\C_{\delta}$ is a $1$-dimensional vector space on which $W(C_{n-j})$ acts via $\delta$ and $S_j\times S_j$ acts trivially.
\end{prop}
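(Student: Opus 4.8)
The plan is to establish the asserted isomorphism by computing the character of each side. First I would observe that the left-hand side is a sum, over $\lambda \in \cP_{2n}$, of copies of $V_\lambda$ with multiplicity $m_\lambda := \#\{(\mu;\nu)\in\cQ_{2n}' : \mu+\nu = \lambda\}$. So the statement will follow if I can identify $m_\lambda$ with the multiplicity of $V_\lambda$ in $\bigoplus_{j=0}^n \Ind_{W(C_{n-j})\times S_j\times S_j}^{S_{2n}}(\C_\delta)$. The idea is that the constraint defining $\cQ_{2n}'$ — that $\sum_i(-1)^{i-1}\mu_i = \sum_i(-1)^{i-1}\nu_i$, i.e., the number of odd columns of $\mu$ equals the number of odd columns of $\nu$ — is a "balancing" condition whose enumeration should match a combinatorial identity for these induced characters.

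The cleanest route, I expect, is via symmetric functions. Writing $s_\lambda$ for Schur functions, the left side corresponds to $\sum_{(\mu;\nu)\in\cQ_{2n}'} s_{\mu+\nu}$. Splitting the pair $\lambda = \mu+\nu$ into rows is awkward, so instead I would pass to columns: transposing, the condition becomes that $\mu^t$ and $\nu^t$ have the same number of odd \emph{parts}. For the right-hand side, $\Ind_{W(C_{n-j})\times S_j\times S_j}^{S_{2n}}(\C_\delta)$ has Frobenius characteristic obtained by multiplying the characteristic of $\C_\delta$ as a character of $W(C_{n-j})$ (for which there is a standard formula — it is the sum of $s_{\alpha}$ over certain $\alpha$, related to the fact that the sign character of $S_{2n-2j}$ restricted to the hyperoctahedral-type group $W(C_{n-j})$ picks out partitions with all parts even, or all columns even, after transposition) by $e_j \cdot e_j$ or $h_j \cdot h_j$ as appropriate for the two trivial $S_j$ factors. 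So the plan is: (i) recall/derive that $\mathrm{ch}\,\Ind_{W(C_m)}^{S_{2m}}(\delta) = \sum_{\substack{\mu \text{ with all parts even}}} s_{\mu}$ (or its transpose), using the classical branching from $S_{2m}$ to $W(C_m)$; (ii) multiply by the contribution of $S_j \times S_j$, which adds two horizontal strips; (iii) sum over $j$; (iv) use a Littlewood-type identity or the Pieri rule to recognize the total as $\sum_{(\mu;\nu)\in\cQ_{2n}'} s_{\mu+\nu}$, the key input being that adding two horizontal strips of the same size $j$ to a partition with all columns even exactly produces the partitions $\mu+\nu$ satisfying the odd-column balance, with the right multiplicity.

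I expect the main obstacle to be step (iv): matching the multiplicities precisely. Adding a horizontal strip of size $j$ twice to a fixed "all-even-columns" partition $\kappa$ can produce the same shape $\lambda=\mu+\nu$ in several ways, and one must check that the number of ways, summed over $j$ and over $\kappa$, equals $m_\lambda$. Concretely I would try to set up an explicit bijection: given $(\mu;\nu) \in \cQ_{2n}'$, decompose $\lambda = \mu+\nu$ column by column — in each column, $\mu$ and $\nu$ contribute lengths whose parities agree in aggregate by the $\cQ_{2n}'$ condition — and read off $j$ (half the total number of odd-length columns, say) and the skew data recording which of $\mu,\nu$ each odd cell came from. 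Verifying this is a bijection, and that under it the horizontal-strip conditions correspond exactly to $\mu,\nu$ being genuine partitions, is where the real work lies; the representation-theoretic identities in steps (i)–(iii) are standard (e.g. Littlewood's formulas, as in Macdonald's book) and I would cite rather than reprove them. An alternative if the bijection proves delicate: compare both sides against the generating-function identity $\sum_j \big(\sum_{\kappa \text{ even cols}} s_\kappa\big) e_j[X] e_j[Y]\big|_{\text{merged}} = \prod (\cdots)$ and match with a known product formula for $\sum_{\cQ_{2n}'} s_{\mu+\nu}$, though I suspect the bijective argument is ultimately cleaner and is the one I would pursue first.
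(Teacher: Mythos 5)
Your plan follows the paper's proof essentially exactly: decompose $\Ind_{W(C_{n-j})}^{S_{2n-2j}}(\C_\delta)$ as the sum of $V_\sigma$ over duplex $\sigma$ (equivalently, those whose diagrams have only even-length columns), apply the Pieri rule for the two trivial $S_j$ factors, and then biject bipartitions $(\mu;\nu)\in\cQ_{2n}'$ with $\mu+\nu=\lambda$ against ordered pairs of horizontal strips of size $j$ whose successive removal from $\lambda$ leaves only even-length columns. One correction to your tentative guess for the bijection: the strip size is $j=\mu_1$, not half the number of odd-length columns of $\lambda$ --- the first strip takes one box from each column of $\mu$, and the second takes one box from each odd-length column of $\nu$ together with one box from each originally even-length column of $\mu$.
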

\begin{proof}
It is well known that
\begin{equation}
\Ind_{W(C_{n-j})}^{S_{2n-2j}}(\C_\delta)\cong \bigoplus_{\sigma\in\cP_{2n-2j}^\dup}V_{\sigma},
\end{equation}
where $\cP_{2n-2j}^\dup$ denotes the set of duplex partitions of $2n-2j$, i.e.\ those in which every part has even multiplicity. Using the Pieri rule, it follows that the multiplicity of $V_\lambda$ in $\Ind_{W(C_{n-j})\times S_j\times S_j}^{S_{2n}}(\C_\delta)$ is the number of ways to successively remove two horizontal strips of size $j$ from the diagram of $\lambda$ so that what remains has only even-length columns. 

Any bipartition $(\mu;\nu)\in\cQ_{2n}'$ such that $\mu+\nu=\lambda$ determines such a pair of horizontal strips, as follows. The columns of the diagram of $\lambda$ are the union of the columns of $\mu$ and the columns of $\nu$ -- for definiteness, say that among columns of the same length, those belonging to $\mu$ come after those belonging to $\nu$. The first horizontal strip consists of one box from each column of $\mu$. The second horizontal strip consists of one box from each odd-length column of $\nu$ and one box from each (originally) even-length column of $\mu$. By the assumptions on $(\mu;\nu)$, both strips have size $\mu_1$. It is easy to check that every choice of two horizontal strips as above arises in this way for a unique $(\mu;\nu)$, and the result follows.
\end{proof}

As a consequence of Theorem \ref{irredcompthm}, we have:
\begin{cor} \label{dimcor}
For every $(\mu;\nu)\in\cQ_{2n}'$, $Z_{\mu;\nu}$ is indeed nonempty, and the quantity $d_{\mu;\nu}$ in Proposition \ref{bundleprop} and Corollary \ref{irredcor} equals $|\nu|$.
\end{cor}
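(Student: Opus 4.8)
The plan is to exploit the dimension count already in place: Theorem \ref{irredcompthm} asserts that every $Y_{\mu;\nu}^T$ is nonempty of dimension exactly $2n^2+n$, while the proof of that theorem derived (conditionally, in the case $Y_{\mu;\nu}^T$ is nonempty) the formula \eqref{dimeqn} for $\dim Y_{\mu;\nu}^T$, namely $2n^2+d_{\mu;\nu}+\mu_2-\nu_2+\mu_4-\nu_4+\cdots$. So the first step is simply to combine these: since Theorem \ref{irredcompthm} removes the ``conceivably empty'' caveat, we get $Z_{\mu;\nu}\neq\emptyset$ for all $(\mu;\nu)\in\cQ_{2n}'$ (because $Y_{\mu;\nu}^T$ maps onto $Z_{\mu;\nu}$ under $\pi$), and we get the numerical identity
\[
d_{\mu;\nu}+\mu_2-\nu_2+\mu_4-\nu_4+\mu_6-\nu_6+\cdots = n
\]
for every $(\mu;\nu)\in\cQ_{2n}'$.

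The remaining step is the purely combinatorial verification that this forces $d_{\mu;\nu}=|\nu|$; equivalently, that
\[
|\nu| + \mu_2 - \nu_2 + \mu_4 - \nu_4 + \mu_6 - \nu_6 + \cdots = n.
\]
Writing $|\nu| = \nu_1+\nu_2+\nu_3+\cdots$, the left-hand side becomes $\nu_1+\nu_3+\nu_5+\cdots + \mu_2+\mu_4+\mu_6+\cdots$, i.e.\ the sum of the odd-indexed parts of $\nu$ together with the even-indexed parts of $\mu$. Now use the defining balance condition \eqref{q2ndef}, which says $\mu_1-\mu_2+\mu_3-\cdots = \nu_1-\nu_2+\nu_3-\cdots$. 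Adding $\mu_2+\mu_4+\cdots+\nu_2+\nu_4+\cdots$ to both sides of this identity gives $\mu_1+\mu_2+\mu_3+\cdots = (\nu_1+\nu_3+\cdots) + (\mu_2+\mu_4+\cdots) + (\text{the rest, which cancels})$; more carefully, one checks directly that $(\nu_1+\nu_3+\cdots)+(\mu_2+\mu_4+\cdots) = \tfrac12\big(|\mu|+|\nu|\big)$ precisely when $\mu_1-\mu_2+\cdots = \nu_1-\nu_2+\cdots$, and $\tfrac12(|\mu|+|\nu|)=\tfrac12\cdot 2n = n$. This is a one-line rearrangement of finite sums.

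The only genuine content is that Theorem \ref{irredcompthm} is available; everything here is bookkeeping. Thus there is no real obstacle — the ``hard part'' was already absorbed into the proof of Theorem \ref{irredcompthm} (via the counting identity of Proposition \ref{repprop}), and this corollary is just the act of reading off the two consequences: nonemptiness of each $Z_{\mu;\nu}$, and the evaluation $d_{\mu;\nu}=|\nu|$ obtained by matching the two dimension formulas for $Y_{\mu;\nu}^T$ and invoking the balance condition \eqref{q2ndef}.
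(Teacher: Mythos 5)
Your proof is correct and follows essentially the same route as the paper: nonemptiness of $Z_{\mu;\nu}$ from nonemptiness of $Y_{\mu;\nu}^T$ via $\pi$, then equating \eqref{dimeqn} with $2n^2+n$ to get $d_{\mu;\nu}=n-(\mu_2-\nu_2+\mu_4-\nu_4+\cdots)$, and finally the rearrangement of the balance condition \eqref{q2ndef} (the paper phrases it as $\mu_1+\nu_2+\mu_3+\nu_4+\cdots=\nu_1+\mu_2+\nu_3+\mu_4+\cdots=n$) to identify this with $|\nu|$. No gaps.
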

\begin{proof}
Since each $Y_{\mu;\nu}^T$ is nonempty, $Z_{\mu;\nu}=\pi(Y_{\mu;\nu}^T)$ is nonempty.
Comparing Theorem \ref{irredcompthm} with \eqref{dimeqn}, we find that
\begin{equation}
d_{\mu;\nu}=n-\mu_2+\nu_2-\mu_4+\nu_4-\mu_6+\nu_6-\cdots.
\end{equation}
Now since $(\mu;\nu)\in\cQ_{2n}'$, we have
\begin{equation}
\mu_1+\nu_2+\mu_3+\nu_4+\mu_5+\nu_6+\cdots=
\nu_1+\mu_2+\nu_3+\mu_4+\nu_5+\mu_6+\cdots=n,
\end{equation}
and it follows that $d_{\mu;\nu}=\nu_1+\nu_2+\nu_3+\cdots=|\nu|$.
\end{proof}

More importantly, we now have our exotic Robinson--Schensted correspondence:
\begin{cor} \label{bijectioncor}
There is a bijection
\[ R_{2n}'\;\longleftrightarrow\;\cT_{2n}' \]
such that $(w,\alpha)\in R_{2n}'$ corresponds to $((\mu;\nu),T)\in\cT_{2n}'$ if and only if $\overline{Y_{w,\alpha}}=\overline{Y_{\mu;\nu}^T}$. 
\end{cor}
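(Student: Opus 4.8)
The plan is to deduce the bijection purely formally from Theorem \ref{irredcompthm} together with the description of $Y$ as a union of conormal bundles over the $K$-orbits $\cO_{w,\alpha}$. First I would recall the two descriptions of the irreducible components of $Y$: on the one hand, since each $Y_{w,\alpha}$ is irreducible of the full dimension $2n^2+n=\dim(V\times G/B)$, the irreducible components of $Y$ are exactly the closures $\overline{Y_{w,\alpha}}$ as $(w,\alpha)$ runs over $R_{2n}'$, and these are pairwise distinct because the orbits $\cO_{w,\alpha}$ are pairwise distinct (an open subset of $\overline{Y_{w,\alpha}}$ maps onto $\cO_{w,\alpha}$ under the cotangent projection). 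On the other hand, by Theorem \ref{irredcompthm} the irreducible components of $Y$ are exactly the closures $\overline{Y_{\mu;\nu}^T}$ as $((\mu;\nu),T)$ runs over $\cT_{2n}'$, and these too are pairwise distinct since $Y$ is the \emph{disjoint} union of the locally closed pieces $Y_{\mu;\nu}^T$, so two of them can have the same closure only if they coincide.

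Next I would observe that both families exhaust the (finite) set of irreducible components of the \emph{same} variety $Y$, so there is a unique bijection between the two index sets matching them up: $(w,\alpha)\in R_{2n}'$ corresponds to $((\mu;\nu),T)\in\cT_{2n}'$ precisely when $\overline{Y_{w,\alpha}}=\overline{Y_{\mu;\nu}^T}$. This is exactly the statement of the corollary. The only thing requiring a word of justification is that the map $f\colon\cT_{2n}'\to R_{2n}'$ constructed in the proof of Theorem \ref{irredcompthm}, characterized by $Y_{\mu;\nu}^T\subseteq\overline{Y_{f((\mu;\nu),T)}}$, is the bijection in question; but this was already proved there, since $f$ was shown to be a bijection and $Y_{\mu;\nu}^T$ dense in $\overline{Y_{f((\mu;\nu),T)}}$ for all $((\mu;\nu),T)$, whence $\overline{Y_{\mu;\nu}^T}=\overline{Y_{f((\mu;\nu),T)}}$.

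In this situation there is essentially no obstacle: the corollary is a bookkeeping restatement of Theorem \ref{irredcompthm}, and the proof is one or two sentences. If I wanted to be slightly more careful I would spell out why $\overline{Y_{w,\alpha}}=\overline{Y_{w',\alpha'}}$ forces $(w,\alpha)=(w',\alpha')$ and likewise on the $\cT_{2n}'$ side, so that ``having equal closure'' really does define a bijection rather than merely a relation; both follow from the disjointness of the respective locally closed stratifications of $Y$ and the fact that each stratum is dense in its closure. With that in hand, the bijection is simply ``match each irreducible component of $Y$ to itself'', read through the two parametrizations.
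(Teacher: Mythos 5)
Your proof is correct and is essentially the paper's own argument: the corollary is just the bijection $f$ already constructed in the proof of Theorem \ref{irredcompthm}, where $f$ was shown to be bijective with $Y_{\mu;\nu}^T$ dense in $\overline{Y_{f((\mu;\nu),T)}}$, hence $\overline{Y_{\mu;\nu}^T}=\overline{Y_{f((\mu;\nu),T)}}$. The extra care you take about distinctness of closures on each side is fine but not something the paper spells out.
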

\begin{proof}
This is the bijection $f$ mentioned in the proof of Theorem \ref{irredcompthm}. 
\end{proof}
An equivalent way to characterize this correspondence is: $(w,\alpha)\in R_{2n}'$ corresponds to $((\mu;\nu),T)\in\cT_{2n}'$ if and only if, for generic $(v,u,V_\bullet,x)\in Y_{w,\alpha}$, we have $(v,x)\in\cO_{\mu;\nu}$ and $V_\bullet\in(G/B)_x^T$.

We do not know a combinatorial description of this correspondence. The method used by Travkin \cite[Section 3.4]{travkin} to relate his mirabolic Robinson--Schensted correspondence to the ordinary one does not appear to work in our setting.

\begin{exam} \label{exoticrsexam}
Let $n=2$ as in Example \ref{n2exam}. We have
\[
\cQ_{4}'=\{(2;2),(21;1),(1;21),(2^2;\emptyset),(1^2;1^2),(\emptyset;2^2),(1^3;1),(1;1^3),(1^4;\emptyset),(\emptyset,1^4)\}.
\]
Easy calculations show that the exotic Robinson--Schensted correspondence in this case is as shown in Table \ref{exoticrstable}. For any $\lambda\in\cP_4$, it happens that a standard tableau $T\in\SYT(\lambda)$ is determined by its descent set $\Des(T)$, consisting of those $i\in\{1,2,3\}$ such that $i+1$ is in a lower row than $i$; for ease of reference in connection with Conjecture \ref{mainconj}, we have listed $\Des(T)$ rather than $T$ itself.
\end{exam}

\begin{table}
\[
\begin{array}{|c||c|c|}
\hline
(w,\alpha)&(\mu;\nu)&\Des(T)\\
\hline
1234&(\emptyset;2^2)&\{2\}\\
\overline{1}234&(1;21)&\{2\}\\
\overline{12}34&(2;2)&\emptyset\\
\overline{123}4&(21;1)&\{2\}\\
\overline{1234}&(2^2;\emptyset)&\{2\}\\
2143&(\emptyset;2^2)&\{1,3\}\\
2\overline{1}43&(1;21)&\{3\}\\
\overline{2}143&(1;21)&\{1\}\\
\overline{21}43&(1^2;1^2)&\{1,3\}\\
\overline{21}4\overline{3}&(21;1)&\{1\}\\
\overline{214}3&(21;1)&\{3\}\\
\overline{2143}&(2^2;\emptyset)&\{1,3\}\\
\hline
\end{array}
\qquad\qquad
\begin{array}{|c||c|c|}
\hline
(w,\alpha)&(\mu;\nu)&\Des(T)\\
\hline
3412&(\emptyset;1^4)&\{1,2,3\}\\
34\overline{1}2&(1;1^3)&\{2,3\}\\
\overline{3}412&(1^3;1)&\{1,2\}\\
\overline{3}4\overline{1}2&(1^2;1^2)&\{2\}\\
34\overline{12}&(1;1^3)&\{1,3\}\\
\overline{34}12&(1^3;1)&\{1,3\}\\
\overline{3}4\overline{12}&(1;1^3)&\{1,2\}\\
\overline{341}2&(1^3;1)&\{2,3\}\\
\overline{3412}&(1^4;\emptyset)&\{1,2,3\}\\
\hline
\end{array}
\]
\bigskip
\caption{Exotic Robinson--Schensted correspondence for $n=2$}\label{exoticrstable}
\end{table} 
\section{Conjectures}
Continue the notation of the previous sections. Let $\cP_K(V\times G/B)$ be the category of $K$-equivariant perverse sheaves on $V\times G/B$, and define 
\[ M_K(V\times G/B)=K_0(\cP_K(V\times G/B))\otimes_\Z \Z[v,v^{-1}]. \]
By Proposition \ref{connectedprop}, this is a free $\Z[v,v^{-1}]$-module with basis $\{C_{w,\alpha}\,|\,(w,\alpha)\in R_{2n}'\}$ where $C_{w,\alpha}$ is the image in $K_0(\cP_K(V\times G/B))$ of the intersection cohomology complex of $\overline{\cO_{w,\alpha}}$ (shifted so as to be perverse). Define $M_G(G/B\times G/B)$ similarly. 

There is a well-known convolution product on $M_G(G/B\times G/B)$ which makes it isomorphic to the Iwahori--Hecke algebra $\cH_{2n}$ of $S_{2n}$. A similar convolution construction makes $M_K(V\times G/B)$ into an $\cH_{2n}$-module. This is a direct generalization of the $\cH_{2n}$-module $M_K(G/B)$ defined by Lusztig and Vogan \cite{lv} (indeed, $M_K(G/B)$ occurs in $M_K(V\times G/B)$ as the $\Z[v,v^{-1}]$-submodule spanned by $C_{w,\emptyset}$ for $w\in R_{2n}$). By analogy with \cite{lv} and \cite{travkin}, we are led to the following conjecture. A reference for the $W$-graph terminology used here is \cite[Chapter 11]{gpf}.

\begin{conj} \label{mainconj}
\begin{enumerate}
\item The basis $\{C_{w,\alpha}\}$ of the $\cH_{2n}$-module $M_K(V\times G/B)$ is a $W$-graph basis. 
\item If $(w,\alpha)$ corresponds to $((\mu;\nu),T)$ under the exotic Robinson--Schensted correspondence, the label of $C_{w,\alpha}$ in the $W$-graph is the descent set $\Des(T)$.
\item If $(w,\alpha)$ corresponds to $((\mu;\nu),T)$ and $(y,\beta)$ corresponds to $((\rho;\sigma),U)$, then $(w,\alpha)\succeq(y,\beta)$ in the $W$-graph preorder if and only if $(\mu;\nu)\geq(\rho;\sigma)$ and $(\nu;\mu)\geq(\sigma;\rho)$. In particular, $(w,\alpha)$ and $(y,\beta)$ lie in the same cell if and only if $(\mu;\nu)=(\rho;\sigma)$. Hence the cells are indexed by $\cQ_{2n}'$.
\item For $(\mu;\nu)\in\cQ_{2n}'$, the corresponding cell module is the irreducible $\cH_{2n}$-module labelled by the partition $\mu+\nu$. Hence the representation of $S_{2n}$ on $M_K(V\times G/B)|_{v=1}$ is isomorphic to those mentioned in Proposition \ref{repprop}. 
\end{enumerate}
\end{conj}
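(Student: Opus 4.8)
The plan is to adapt the Lusztig--Vogan analysis of symmetric spaces \cite{lv}, in the enhanced form Travkin developed for the mirabolic case \cite{travkin}, treating one simple reflection at a time. First I would make the bar involution explicit: Verdier duality on $\cP_K(V\times G/B)$ induces a $\Z$-semilinear involution on $M_K(V\times G/B)$ compatible with the bar involution of $\cH_{2n}$, and each $C_{w,\alpha}$ is bar-invariant, being the class of an IC complex. Connectedness of all stabilizers (Proposition \ref{connectedprop}) ensures that no nontrivial $K$-equivariant local systems appear, so the transition matrix from $\{C_{w,\alpha}\}$ to the standard orbit basis is unitriangular with polynomial entries --- the exotic Kazhdan--Lusztig--Vogan polynomials. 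To obtain part (1) I would compute each generator action $C_s\cdot C_{w,\alpha}$ by pushing and pulling along the $\mathbb{P}^1$-fibration $V\times G/B\to V\times G/P_s$, where $P_s$ is the minimal parabolic for $s$. This reduces the whole problem to classifying, for each $s$, the type of the orbit $\cO_{w,\alpha}$ relative to this fibration: the enhanced analogue of the Richardson--Springer/Matsuki--Oshima list for $K$ on $G/B$, augmented by the new types arising from the vector coordinate $v$. In each type the $C_s$-action should take the standard $W$-graph form with nonnegative integer edge weights supplied by the decomposition theorem, and assembling the types proves (1).

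From this generator action I would read off the label $I(w,\alpha)=\{s : C_s\cdot C_{w,\alpha}=(v+v^{-1})\,C_{w,\alpha}+\cdots\}$ as an explicit condition on $(w,\alpha)$. Part (2) asserts that this set equals $\Des(T)$ under the bijection of Corollary \ref{bijectioncor}; since we lack a combinatorial model of the exotic correspondence, it must be proved geometrically. The bridge is the characterization of the correspondence through the conormal variety: $(w,\alpha)$ corresponds to $((\mu;\nu),T)$ exactly when a generic point of $Y_{w,\alpha}$ lies in $Y_{\mu;\nu}^T$, that is, when the generic flag in the orbit lies in the Springer-fibre stratum $(G/B)_x^T$. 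By the work of Spaltenstein \cite{spaltenstein} and Steinberg \cite{st1}, membership of $s$ in $\Des(T)$ is governed by whether the restriction of $\rho_s:G/B\to G/P_s$ to $(G/B)_x^T$ is generically finite. I would then prove (2) by showing that, on a generic point of $Y_{w,\alpha}$, the $\mathbb{P}^1$-fibration governing $C_s$ restricts to exactly this Springer-fibre fibration, so the two conditions coincide.

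With (1) and (2) in hand, the cells are the classes of the $W$-graph preorder. As in the classical type-$A$ Kazhdan--Lusztig analysis, I would show that $(w,\alpha)$ and $(y,\beta)$ lie in the same cell precisely when $(\mu;\nu)=(\rho;\sigma)$: the elements of such a cell are then indexed by $\SYT(\mu+\nu)$ carrying the labels $\Des(T)$, and their mutual edges must agree with those of the Kazhdan--Lusztig $W$-graph of the Specht module $V_{\mu+\nu}$, since both are determined by the same local action on standard tableaux with prescribed descent sets. For the preorder between distinct cells I would match the support of the $C_s$-action against the closure order on orbits, using Magyar's order \eqref{magyareqn} and the combinatorics of the map $\Psi$, to recover the condition that $(\mu;\nu)\ge(\rho;\sigma)$ and $(\nu;\mu)\ge(\sigma;\rho)$ in (3). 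This, once the cell $W$-graph on $\SYT(\mu+\nu)$ is matched with the Kazhdan--Lusztig $W$-graph of $V_{\mu+\nu}$, identifies each cell module with $V_{\mu+\nu}$, as in (4); summing over $\cQ_{2n}'$ and setting $v=1$ yields $\bigoplus_{(\mu;\nu)\in\cQ_{2n}'}V_{\mu+\nu}$, which Proposition \ref{repprop} identifies with the asserted induced representations and which also provides a global consistency check on the identification.

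The main obstacle is part (2): matching the geometric $\tau$-invariant with $\Des(T)$ in the absence of a combinatorial description of the exotic correspondence --- precisely the point at which Travkin's method fails here. Everything downstream depends on it, since the cell $W$-graphs can only be recognized as Specht $W$-graphs once their labels are known to be descent sets of genuine standard tableaux. The difficulty is that the correspondence is defined transcendentally through generic points of the components $\overline{Y_{\mu;\nu}^T}$, while the $\mathbb{P}^1$-fibrations computing $C_s$ entangle the vector coordinate $v$ with the flag, so the reduction to pure Springer-fibre geometry is not automatic. A secondary difficulty sits in part (1): one must verify that the new vector-coordinate orbit types still yield edges with nonnegative integer weights and no higher-order terms, so that the module is a genuine $W$-graph rather than a more general structure. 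Connectedness of stabilizers and the equal-parameter nature of $\cH_{2n}$ make this plausible, but each enhanced type must still be checked by hand.
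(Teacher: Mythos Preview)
The statement you are trying to prove is labelled \emph{Conjecture} in the paper, and the paper does not prove it. The only content in the paper following the statement is the remark that it is ``straightforward to verify Conjecture~\ref{mainconj} in the cases $n=1$ and $n=2$.'' So there is no proof in the paper to compare your attempt against.

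Your proposal is not a proof either; it is a research outline written in the conditional (``I would compute\ldots'', ``I would show\ldots''). You correctly identify the crux: part~(2) requires matching the geometric $\tau$-invariant with $\Des(T)$, and you yourself note that this is ``precisely the point at which Travkin's method fails here'' and that the paper has ``no combinatorial description of the exotic correspondence.'' That is exactly why the authors state this as a conjecture rather than a theorem. Your plan for~(1) is reasonable in spirit, but the classification of enhanced orbit types relative to each $\mathbb{P}^1$-fibration, and the verification that every type yields a genuine $W$-graph edge, is not carried out anywhere. For~(3) you appeal to Magyar's order via~\eqref{magyareqn}, but the paper only conjectures the relevant closure-order statement~\eqref{poconjeqn} for $K$-orbits, so that step also rests on an unproved claim. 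In short, your outline is a plausible strategy for attacking an open problem, not a proof, and the paper makes no claim to have one.
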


It is straightforward to verify Conjecture \ref{mainconj} in the cases $n=1$ and $n=2$.

\begin{rmk}
It is known \cite{trapa} that the cells for the $(GL_n\times GL_n)$-action on $G/B$ are indexed by the sign tableaux of signature $(n,n)$. There is a simple bijection between these sign tableaux and $\cQ_{2n}'$: transpose rows to columns, and assign columns beginning with $+$ to the $\mu$ side of the bipartition, and those beginning with $-$ to the $\nu$ side. We do not know any deeper connection between the two types of cells.
\end{rmk}

We return now to the setting of the introduction, and the proposed classification of unipotent exotic character sheaves. Imitating the arguments of \cite[Theorem 3.4]{grojnowski}, one can assign to each unipotent exotic character sheaf a pair $(c_1,c_2)$ where $c_1$ is a cell for the action of $K$ on $V\times G/B$ and $c_2$ is a cell for the action of $K$ on $G/B$, with the property that the two-sided cell in $S_{2n}$ associated to $c_1$ is the same as that for $c_2$. If Conjecture \ref{mainconj} holds, then such pairs $(c_1,c_2)$ are in bijection with pairs $((\mu;\nu),\lambda)\in\cQ_{2n}'\times\cP_{2n}^\dup$ with the property that $\mu+\nu=\lambda$, or in other words with $(\mu;\nu)\in\cQ_{2n}$ such that $\mu+\nu$ is duplex. These in turn are in bijection with $\cQ_n$ via the map $\cQ_n\to\cQ_{2n}:(\mu;\nu)\mapsto(\mu\cup\mu;\nu\cup\nu)$, and of course $\cQ_n$ parametrizes the irreducible representations of $W_K=W(C_n)$ as in \cite[Section 5.5]{gpf}. Hence we would have a map from the set of isomorphism classes of unipotent exotic character sheaves to the set of isomorphism classes of irreducible representations of $W_K$. It seems plausible that this map is a bijection, as with ordinary character sheaves for $GL_n$ (but in contrast to the situation of ordinary character sheaves for $K$). If true, this would be another respect in which the exotic picture has neater combinatorics than the usual picture for the symplectic group.

\textbf{Acknowledgements.} We would like to thank Kyo Nishiyama for encouraging comments, and for inviting us to Tokyo for the September 2011 workshop in honour of the 60th birthday of Jiro Sekiguchi, where this work was completed.

\end{document}